\newtheorem{theorem}{Theorem}[section]
\newtheorem{definition}{Definition}[section]
\newtheorem{prop}[theorem]{Proposition}
\theoremstyle{definition}
\def\th@remark{%
  \thm@headfont{\bfseries}%
  \normalfont 
  \thm@preskip\topsep \divide\thm@preskip\tw@
  \thm@postskip\thm@preskip
}
\theoremstyle{remark}
\newtheorem{lemma}[theorem]{Lemma}
\newcommand{\Bdot}{\mathring{B}}
\newcommand{\Balpha}{\mathring{B}^\alpha_{3,\infty}}
\newcommand{\Wdot}{\mathring{W}}
\newcommand{\Hdot}{\mathring{H}}
\newcommand{\half}{(-\overline{\Delta})^\frac{1}{2}}
\newcommand{\grad}{\overline{\nabla}}
\newcommand{\lap}{\overline{\Delta}}
\newcommand{\dnu}{\partial_\nu \Psi}
\def\XXint#1#2#3{{\setbox0=\hbox{$#1{#2#3}{\int}$ }
\vcenter{\hbox{$#2#3$ }}\kern-.6\wd0}}
\newcommand{\dive}{\operatorname{div}}
\newcommand{\curl}{\operatorname{curl}}
\newcommand{\supp}{\operatorname{supp}}
\newcommand{\riesz}{\mathcal{R}}
\newcommand{\Rthreeplus}{\mathbb{R}^3_+}
\newcommand{\Pgrad}{\mathbb{P}_{\nabla}}
\numberwithin{equation}{section}
\numberwithin{equation}{section}
\title[On the Weak Solutions to the 3D Inviscid Quasi-geostrophic System]{On the Weak Solutions to the 3D Inviscid Quasi-geostrophic System}
\author[Novack]{Matthew D. Novack}
\address[Matthew D. Novack]{\newline Department of Mathematics, \newline The University of Texas at Austin, Austin, TX 78712, USA}
\email{mnovack@math.utexas.edu}
\date{\today}
\subjclass[2010]{76B03,35Q35} \keywords{Quasi-geostrophic equation, global weak solution, conservation of energy}
\begin{document}

\begin{abstract}
The purpose of this note is to study the weak solutions to the inviscid quasi-geostrophic system for initial data belonging to Lebesgue spaces. We give a global existence result as well as detail the connections between several different notions of weak solutions.  In addition, we give a condition under which the energy of the system is conserved.    
\end{abstract}
\maketitle \centerline{\date}

\section{introduction and main results}
We study the 3-D inviscid quasi-geostrophic system $(QG)$

\[
\left\{
       \begin{array}{@{}l@{\thinspace}l}
       \partial_t(\Delta \Psi) + \grad^\perp \Psi \cdot \grad (\Delta \Psi) = f_L  \hspace{.87in}  t>0,\hspace{.1in} z>0,\hspace{.1in} x=(x_1,x_2)\in\mathbb{R}^2 \hspace{.3in} \text{(QG)}_L  \\
       \partial_t(\dnu)+ \grad^\perp \Psi \cdot \grad(\dnu)= f_\nu \hspace{.84in}  t>0,\hspace{.1in} z=0,\hspace{.1in} x=(x_1,x_2)\in\mathbb{R}^2 \hspace{.3in} \text{(QG)}_\nu \\
       \end{array}  \right
.\]
supplied with an initial data $\Psi_0$. Here $$\Psi:\underset{(t,z,x)}{[0,\infty)\times\mathbb{R}_+\times\mathbb{R}^2} \underset{\rightarrow}{\rightarrow} \underset{\Psi(t,z,x)}{\mathbb{R}}$$ is the stream function for the geostrophic flow, and $f_L$ and $f_\nu$ are forcing terms.  
We use the notations
$$ \Delta \Psi = \partial_{x_1 x_1}\Psi +  \partial_{x_2 x_2}\Psi + \partial_{zz}\Psi , \hspace{.3in} \grad \Psi = (0, \partial_{x_1}\Psi, \partial_{x_2} \Psi )$$
$$ \grad^\perp \Psi =  (0, -\partial_{x_2}\Psi, \partial_{x_1} \Psi ),  \hspace{.3in} \dnu = -\partial_z \Psi(t,0,x).$$
The system is used to study stratified flows in which the Coriolis force is balanced with the pressure and serves as a model in simulations of large-scale atmospheric and oceanic circulation.    

  The purpose of this work is to study the existence and properties of various types of weak solutions to this system. We provide global existence results for initial data belonging to Lebesgue spaces and determine conditions under which a weak solution conserves the energy of the sytem.  Much mathematical research has been focused on this system and its variants.  Beale and Bourgeois \cite{bb} and Desjardins and Grenier \cite{dg} derived the system from physical principles.  Puel and Vasseur \cite{pv} first proved the global existence of weak solutions in the case of $L^2$ initial data, using a projection operator to reformulate the problem.  In the case when $\Delta \Psi_0 \equiv 0$ and there are no forcing terms, $\Psi(t)$ remains harmonic for all time $t$.  Then using that $\half$ is the Dirichlet-to-Neumann operator for $\Rthreeplus$, we set for each time $t\geq 0$
$$\theta := \dnu = \half \Psi , \qquad  u:= \grad^\perp\Psi = \left( 0, -\mathcal{R}_2 \half \Psi, \mathcal{R}_1 \half \Psi \right) = \mathcal{R}^\perp \theta, $$
where $\mathcal{R}_1, \mathcal{R}_2$ are the Riesz transforms in $\mathbb{R}^2$. Then $(QG)$ reduces to the well-studied inviscid surface quasi-geostrophic equation, which can be written as
$$ \partial_t \theta +u \cdot \grad \theta =0. $$
For the sake of consistency and to keep in mind the connection to the 3D model, we shall always treat $\grad, \grad^\perp$, and $\mathcal{R}^\perp$ as vectors with three components and zero first component.  SQG has received considerable attention due to its similarities with the important systems of fluid mechanics (see Constantin, Majda, and Tabak \cite{cmt}, Garner, Held, Pierrehumber, and Swanson \cite{MR1312238}, among others).  Weak solutions were constructed in $L^2$ by Resnick \cite{Resnick}.  Marchand \cite{Marchand} first gave a proof of the existence of global weak solutions when the initial data is not in $L^2$ but rather $L^p$ for any $p>\frac{4}{3}$. When critical dissipation is added to the transport equation for $\dnu$ in (QG), global regularity was established in \cite{novackvasseur} using the De Giorgi technique in combination with a bootstrapping argument and an appropriate Beale-Kato-Majda criterion. Surface quasi-geostrophic flows on bounded domains have been considered by Constantin and Ignatova \cite{ci}, \cite{ci2}, Constantin and Nguyen \cite{cn}, \cite{cn2} and Nguyen \cite{nguyen} using the spectral Riesz transform. Global existence of weak solutions for inviscid SQG is shown by Constantin and Nguyen \cite{cn} and for a generalized SQG model by Nguyen \cite{nguyen}. In \cite{boundeddomains}, an appropriate boundary condition is derived and global weak solutions to the 3D model posed on a cylindrical domain are constructed. 
  
\subsection{The Reformulated System}
A crucial tool in our analysis will be a reformulation of $(QG)$. We draw inspiration from Puel and Vasseur \cite{pv}, who used a reformulation to obtain their global existence result. The physical system as written is analogous to the vorticity form of the Euler equations with an additional boundary condition.  However, one may consider the following reformulation, in which $\curl(Q)$ acts as a Lagrange multiplier similar to the gradient of the pressure in the Euler equations: 

\[
\left\{
       \begin{array}{@{}l@{\thinspace}l}
       \partial_t(\nabla \Psi) + \grad^\perp \Psi : \grad (\nabla \Psi) = \curl Q + \nabla F  \hspace{.455in} z>0  \\
       \curl Q \cdot \nu = 0 , \hspace{.1in} \partial_\nu F = f_\nu \hspace{1.56in} z=0\\
       \Delta F = f_L  \hspace{2.675in} z>0. \hspace{.311in} \text{(rQG)} \\
       \end{array}  \right
.\]

Formally, taking the divergence of $(rQG)$ gives $(QG)_L$, and taking the trace gives $(QG)_\nu$.  To obtain $(rQG)$ from $(QG)$, one must invert the divergence operator coupled with a Neumann boundary condition.  While providing a link between the two formulations will be an important part of our analysis (see \cref{connectionstheorem}), let us proceed from the perspective of $(rQG)$ for the time being.  Following Puel and Vasseur \cite{pv}, we define the notion of weak solutions to $(rQG)$.

\begin{definition}[{\textbf{Weak Solutions to $\mathbf{(rQG)}$}}]\label{weakrqg}
Let $T,R$ be fixed, $\phi\in C^\infty(\mathbb{R}^4)$ compactly supported in $(-T,T)\times(-R,R)^3$, and $F$ be such that $\Delta F = f_L$, $\partial_\nu F = f_\nu$.  A weak solution $\Psi$ to (rQG) with forcing $f_\nu$, $f_L$ on $(0,T)\times\Rthreeplus$ must satisfy 

\begin{align*}
-\int_0^T \int_0^\infty &\int_{\mathbb{R}^2} \left(\left( \partial_t \nabla \phi + \grad^\perp\Psi:\grad\nabla\phi \right) \cdot \nabla \Psi  + \nabla \phi \cdot \nabla F \right )\,dx\,dz\,dt\\
&= \int_0^\infty \int_{\mathbb{R}^2} \nabla\phi(0,z,x)\cdot \nabla\Psi(0,z,x)\,dx\,dz
\end{align*}
for all $R, \phi$.  For the weak formulation to make sense, we require $ \nabla\Psi,\grad^\perp\Psi\otimes\nabla \Psi \in L_{loc}^1([0,T]\times\Rthreeplus)$.
\end{definition}

We remark that the definition of weak solutions contains no information about $\curl(Q)$.  Indeed, the choice of test functions formally encodes the fact that inverting the divergence operator is unique only up to the curl of a vector field.  

\subsection{Statement of Main Results}
We begin with the global existence of weak solutions to $(rQG)$.  
\begin{theorem}\label{main}
Suppose that $p\in(\frac{4}{3},\infty]$ and $q\in(\frac{6}{5}, 3]$.
Let $\theta \in L^p(\mathbb{R}^2)$, $\omega \in L^q(\Rthreeplus)$, $f_L \in L^1\left([0,T];L^\frac{6}{5}\cap L^q(\Rthreeplus)\right)$, and $f_\nu \in L^1\left([0,T];L^\frac{4}{3} \cap L^p(\mathbb{R}^2)\right)$ for all $T>0$. When $p=\infty$ we additionally require a finite $p'$ such that $\theta\in L^{p'}(\mathbb{R}^2)$, and when $q=3$ we additionally require a $q'\in (\frac{6}{5},3)$ such that $\omega \in L^{q'}(\Rthreeplus)$. Then there exists a global weak solution $\nabla \Psi$ on $(0,\infty)\times \Rthreeplus$ to $(rQG)$ with forcing $f_\nu$, $f_L$ such that $\Delta \Psi |_{t=0}= \omega$ and $\dnu|_{t=0}=\theta$.  In addition, there exists a constant $C$ such that for all $T>0$, $\Psi$ satisfies the following bound: 
\begin{align*}
\|\nabla \Psi &\|_{L^\infty\left([0,T];L^\frac{3p}{2}(\mathbb{R}_+^3) + L^\frac{3q}{3-q}(\Rthreeplus)\right)} +  \|\Delta \Psi\|_{L^\infty\left([0,T];L^q(\Rthreeplus)\right)} + \|\dnu\|_{L^\infty\left([0,T];L^p(\mathbb{R}^2)\right)} \\
&\leq C\left( \|\omega\|_{L^q} + \|\theta\|_{L^p} + \|f_L\|_{L^1\left([0,T];L^q(\Rthreeplus)\right)} + \|f_\nu\|_{L^1\left([0,T];L^p(\mathbb{R}^2)\right)}\right) . \\ 
\end{align*}
\end{theorem}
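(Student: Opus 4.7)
The plan is to construct smooth approximate solutions, extract uniform a priori bounds coming from the transport structure of the system, and pass to the limit via compactness. I would mollify the initial data $(\omega,\theta)$ and the forcings $(f_L,f_\nu)$ at scale $\epsilon$, and reconstruct a smooth approximate initial stream function $\Psi_0^\epsilon$ by solving the Neumann problem $\Delta\Psi_0^\epsilon=\omega^\epsilon$ in $\Rthreeplus$ with $-\partial_z\Psi_0^\epsilon|_{z=0}=\theta^\epsilon$. To obtain a classical solution $\Psi^\epsilon$ on $[0,T]$, I replace the advecting velocity $\grad^\perp\Psi^\epsilon$ in the transport term by its spatial mollification, reducing the approximate problem to an ODE in a Banach space of smooth functions, in the spirit of \cite{pv}.

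\textbf{Step 2 (A priori estimates).} Since $\grad^\perp\Psi^\epsilon$ is divergence-free in the 3D sense and tangential to $\{z=0\}$, equation $(QG)_L$ is a pure transport equation for $\omega^\epsilon=\Delta\Psi^\epsilon$ and $(QG)_\nu$ is a pure transport equation for $\theta^\epsilon=\dnu^\epsilon$ (all with a mollified transport velocity). Standard $L^p$ transport estimates then yield, uniformly in $\epsilon$,
\[
\|\omega^\epsilon\|_{L^\infty_t L^q(\Rthreeplus)} \le \|\omega\|_{L^q} + \|f_L\|_{L^1_t L^q}, \qquad \|\theta^\epsilon\|_{L^\infty_t L^p(\mathbb{R}^2)} \le \|\theta\|_{L^p} + \|f_\nu\|_{L^1_t L^p}.
\]
To recover the mixed bound on $\nabla\Psi^\epsilon$, I split $\Psi^\epsilon=\Psi^\epsilon_H+\Psi^\epsilon_I$, where $\Psi^\epsilon_H$ is the harmonic extension with Neumann data $\theta^\epsilon$ and $\Psi^\epsilon_I$ solves $\Delta\Psi^\epsilon_I=\omega^\epsilon$ with zero Neumann trace. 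Riesz-potential and Hardy--Littlewood--Sobolev estimates then yield $\nabla\Psi^\epsilon_H\in L^{3p/2}(\Rthreeplus)$, the scaling-critical space associated to $\theta\in L^p(\mathbb{R}^2)$, together with $\nabla\Psi^\epsilon_I\in L^{3q/(3-q)}(\Rthreeplus)$. The auxiliary exponents $p'$ and $q'$ in the endpoint cases $p=\infty$ and $q=3$ are exactly what is needed to stay off the HLS endpoint when reconstructing $\nabla\Psi^\epsilon$ pointwise in time.

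\textbf{Step 3 (Compactness and limit).} Using $(rQG)$ I would derive a uniform bound on $\partial_t\nabla\Psi^\epsilon$ in $L^1_t W^{-1,r}_{\mathrm{loc}}$ for some $r>1$, where the worst contribution comes from the quadratic transport term and is controlled by Step 2 together with H\"older's inequality. Combined with the spatial regularity of Step 2 (and the compact embedding gained from $\Delta\Psi^\epsilon\in L^\infty_tL^q$), Aubin--Lions yields strong convergence of $\nabla\Psi^\epsilon$ in $L^r_{\mathrm{loc}}$ along a subsequence. This is enough to pass to the limit in the quadratic term $\grad^\perp\Psi^\epsilon:\grad(\nabla\Psi^\epsilon)$ tested against $\nabla\phi$ after integration by parts, producing a weak solution in the sense of \cref{weakrqg} and completing the proof.

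\textbf{Main obstacle.} The delicate step is the compactness/limiting argument, since the product $\grad^\perp\Psi\otimes\nabla\Psi$ is scaling-critical in the target Lebesgue spaces and leaves essentially no slack as $p\to(4/3)^+$ or $q\to(6/5)^+$. In particular, the trace of $\nabla\Psi^\epsilon$ on $\{z=0\}$, which links the interior transport of $\omega^\epsilon$ to the purely tangential transport of $\theta^\epsilon$, must be controlled uniformly; this is precisely where the extra integrability assumed at the endpoints $p=\infty$ and $q=3$ becomes essential for closing the compactness estimates.
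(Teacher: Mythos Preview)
Your outline is correct and matches the paper's strategy: regularize, harvest the transport $L^q$/$L^p$ bounds on $\Delta\Psi^\epsilon$ and $\partial_\nu\Psi^\epsilon$, reconstruct $\nabla\Psi^\epsilon\in L^{3p/2}+L^{3q/(3-q)}$ via the harmonic/interior splitting, and pass to the limit by Aubin--Lions after bounding $\partial_t\nabla\Psi^\epsilon$ in a negative space using the weak $(rQG)$ formulation.

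Two points of comparison worth recording. First, for the approximate system the paper adds a critical diffusion $\epsilon\,(-\overline{\Delta})^{1/2}\partial_\nu\Psi_\epsilon$ on the boundary and appeals to the global regularity theory of \cite{novackvasseur}, whereas you mollify the transport velocity in the spirit of \cite{pv}; the paper explicitly notes that either route works, so this is a stylistic rather than mathematical difference. Second, and more substantively, in your Step~3 you cite ``the compact embedding gained from $\Delta\Psi^\epsilon\in L^\infty_tL^q$'' for Aubin--Lions, but that Rellich--Kondrachov argument only handles the interior piece $\Psi^\epsilon_I$. For the harmonic piece $\Psi^\epsilon_H$ there is no second derivative in $L^q$, and the compactness of $\nabla\Psi^\epsilon_H$ in $L^2_{\mathrm{loc}}$ \emph{up to the boundary} $\{z=0\}$ is not automatic from $\partial_\nu\Psi^\epsilon_H\in L^p(\mathbb{R}^2)$ alone. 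The paper isolates this as a separate lemma (\cref{strongconvergence}): interior elliptic regularity gives strong $L^2$ convergence on $\{z>\delta\}$, and the uniform $L^{3p/2}$ bound on $\nabla\Psi^\epsilon_H$ with $3p/2>2$ (this is exactly where $p>4/3$ enters) lets one control the boundary strip $\{0<z<\delta\}$ uniformly and send $\delta\to0$. Your sketch should make this step explicit, since without it the Aubin--Lions argument does not close.
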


Let us give a simple explanation for the restrictions on $p$ and $q$. In order for the nonlinear term $\grad\cdot(\grad^\perp \Psi \otimes \nabla\Psi )$ to be well-defined as a distribution from integration by parts, we need $\nabla \Psi \in L^2(\Rthreeplus)$ (at least locally).  If $\Delta\Psi_0 \in L^\frac{6}{5}(\Rthreeplus)$ and $\dnu_0\in L^\frac{4}{3}(\mathbb{R}^2)$, then solving the elliptic boundary value problem gives $\nabla \Psi_0 \in L^2(\Rthreeplus)$, hence the restrictions on $q$ and $p$.  If $q=3$ or $p=\infty$, the corresponding Lebesgue norm on $\nabla \Psi$ is actually the standard $BMO$ norm in the space of functions of bounded mean oscillation; for simplicity's sake we employ this abbreviation. The additional assumptions on $\theta$ when $p=\infty$ and $\omega$ when $q=3$ are technical requirements which are necessary to handle the decay at infinity of functions defined in $\Rthreeplus$. The solutions we construct are obtained by taking a weak limit of smooth solutions to a regularized system.  Global smooth solutions for the regularized system are constructed following \cite{novackvasseur}. We refer to the preliminaries for a precise statement of the result we shall use, and the appendix for a brief description of the techniques. 

The following theorem addresses the conservation of the energy $\|\nabla\Psi (t) \|_{L^2(\Rthreeplus)}$ in the case of no forcing.  Here $\Balpha(\mathbb{R}^2)$ is the usual homogeneous Besov space.
\begin{theorem}\label{conservation}
Let $\nabla\Psi$ be a weak solution to $(rQG)$ with no forcing such that
$$ \nabla\Psi \in C\left([0,T);L^2(\Rthreeplus)\right) \cap L^3\left([0,T)\times[0,\infty); \Balpha(\mathbb{R}^2)\right) $$
for some $\alpha > \frac{1}{3}$.  Then $\|\nabla\Psi(t)\|_{L^2(\Rthreeplus)} = \|\nabla\Psi_0\|_{L^2(\Rthreeplus)}$ for $t\in[0,T)$.
\end{theorem}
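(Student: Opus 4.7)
The plan is to adapt the Constantin-E-Titi commutator argument for the Euler equations to $(rQG)$ on the half-space. The key observation is that in \cref{weakrqg} the test field is always a gradient $\nabla \phi$, so the ``pressure-like'' term $\curl Q$ drops out automatically; I only need to control the advective nonlinearity $\grad \cdot (\grad^\perp \Psi \otimes \nabla \Psi)$.

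First I would mollify tangentially. Let $\eta_\epsilon$ be a standard mollifier on $\mathbb{R}^2$ and set $\Psi^\epsilon := \eta_\epsilon *_x \Psi$. Since $\eta_\epsilon *_x \phi$ is an admissible test function whenever $\phi$ is, substituting it into the weak formulation and shifting the mollifier onto $\Psi$ produces the weak identity (against gradient test fields)
\[
\partial_t \nabla \Psi^\epsilon + \grad \cdot (\grad^\perp \Psi^\epsilon \otimes \nabla \Psi^\epsilon) + \grad \cdot T_\epsilon \in (\nabla C^\infty_c)^{\perp},
\]
where $T_\epsilon := (\grad^\perp \Psi \otimes \nabla \Psi)^\epsilon - \grad^\perp \Psi^\epsilon \otimes \nabla \Psi^\epsilon$ is the Constantin-E-Titi commutator. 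Pair this identity with $\nabla \Psi^\epsilon$ itself, after introducing a spatial cutoff $\chi_R(z,x)$ and an auxiliary time mollification and then sending $R \to \infty$ and the time parameter to zero. Two structural cancellations are central. The advective contribution reduces to $\tfrac{1}{2}\int \grad^\perp \Psi^\epsilon \cdot \grad |\nabla \Psi^\epsilon|^2$, which vanishes since $\grad \cdot \grad^\perp \Psi^\epsilon = 0$ in the interior and $\grad^\perp \Psi^\epsilon$ has no normal component at $\{z=0\}$; likewise, the boundary trace of $T_\epsilon \cdot \nu$ at $z=0$ is zero because $T_\epsilon$ inherits a zero first row and column from $\grad^\perp \Psi$ and $\nabla \Psi$. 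What survives is
\[
\tfrac{1}{2}\|\nabla \Psi^\epsilon(t)\|_{L^2(\Rthreeplus)}^2 - \tfrac{1}{2}\|\nabla \Psi^\epsilon(0)\|_{L^2(\Rthreeplus)}^2 = \int_0^t \int_{\Rthreeplus} T_\epsilon : \grad \nabla \Psi^\epsilon \, dx\, dz\, ds.
\]

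The decisive step is the commutator estimate. Writing, with $\delta_y f(x) := f(x-y)-f(x)$,
\[
T_\epsilon(s,z,x) = \int_{\mathbb{R}^2} \eta_\epsilon(y)\,(\delta_y \grad^\perp \Psi)\otimes(\delta_y \nabla \Psi)\,dy - (\grad^\perp \Psi - \grad^\perp \Psi^\epsilon)\otimes(\nabla \Psi - \nabla \Psi^\epsilon),
\]
and invoking the standard Besov bounds $\|\delta_y f(s,z,\cdot)\|_{L^3(\mathbb{R}^2)} \lesssim |y|^\alpha \|f(s,z,\cdot)\|_{\Balpha}$ and $\|\grad_x f^\epsilon(s,z,\cdot)\|_{L^3(\mathbb{R}^2)} \lesssim \epsilon^{\alpha-1}\|f(s,z,\cdot)\|_{\Balpha}$ applied to $f = \nabla \Psi$ (noting $\|\grad^\perp \Psi\|_{\Balpha} = \|\nabla \Psi\|_{\Balpha}$), H\"older in $x$ followed by integration in $(s,z)$ give
\[
\left|\int_0^t \int_{\Rthreeplus} T_\epsilon : \grad \nabla \Psi^\epsilon\right| \lesssim \epsilon^{3\alpha-1}\,\|\nabla \Psi\|_{L^3([0,T)\times[0,\infty);\Balpha)}^3,
\]
which tends to zero precisely because $\alpha > \tfrac{1}{3}$. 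Combined with $\nabla \Psi^\epsilon(t) \to \nabla \Psi(t)$ in $C([0,T);L^2(\Rthreeplus))$, this yields $\|\nabla \Psi(t)\|_{L^2(\Rthreeplus)} = \|\nabla \Psi_0\|_{L^2(\Rthreeplus)}$ for every $t \in [0,T)$.

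The main obstacle is not the commutator estimate itself --- a direct adaptation of the classical Constantin-E-Titi bound --- but the rigorous use of $\nabla \Psi^\epsilon$ as a test field in \cref{weakrqg}, together with the careful verification that all boundary integrals at $\{z=0\}$ and at spatial infinity genuinely vanish. This hinges on the structural feature of $(rQG)$ that both $\grad^\perp \Psi$ and $\nabla \Psi$ have zero first ($z$-) component, so that the boundary trace of both the advective flux and of $T_\epsilon \cdot \nu$ vanishes identically, and on the fact that the entire divergence $\grad \cdot T_\epsilon$ only involves tangential derivatives, which is precisely what the Besov regularity in $\mathbb{R}^2$ controls.
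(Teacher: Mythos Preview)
Your proposal is correct and follows essentially the same Constantin--E--Titi tangential-mollification strategy as the paper, with only cosmetic differences (you mollify once and then test with $\nabla\Psi^\epsilon$, whereas the paper mollifies twice and uses the identity $\int g\,(f^\epsilon)^\epsilon = \int g^\epsilon f^\epsilon$; and you use the standard CET commutator decomposition rather than the paper's variant in \cref{lpstuff}(\ref{tres})). One small slip worth correcting: $\nabla\Psi$ does \emph{not} have zero first ($z$-)component --- only $\grad^\perp\Psi$ and $\grad\Psi$ do --- but this is harmless, since $\grad\cdot T_\epsilon$ is a purely horizontal divergence and the integration by parts producing $\int T_\epsilon:\grad\nabla\Psi^\epsilon$ is in $x$ only, so no boundary term at $\{z=0\}$ ever arises.
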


In the case $\Delta\Psi_0 \equiv 0$, the system reduces to SQG and one has the equality 
$$ \|\nabla\Psi(t)\|_{L^2(\Rthreeplus)} = \|\dnu(t)\|_{\Hdot^{-\frac{1}{2}}(\mathbb{R}^2)}. $$
The quantity $\|\dnu(t)\|_{\Hdot^{-\frac{1}{2}}(\mathbb{R}^2)}$ is actually the Hamiltonian of the system in this case; see Resnick \cite{Resnick} or Buckmaster, Shkoller, Vicol \cite{2016arXiv161000676B}.  Buckmaster, Shkoller, and Vicol  provide a proof of the non-uniqueness of weak solutions below a certain regularity threshold.  Conversely, Isett and Vicol \cite{Isett2015} prove that when $\dnu\in L^3_{t,x}$, the Hamiltonian is conserved.

\subsection{Relations between (QG) and (rQG)}

It is interesting to consider whether weak solutions to $(rQG)$ might be weak solutions to $(QG)$, and vice versa. In this section we address this question, therein justifying our use of the reformulated system. We define two classes of weak solutions to $(QG)$; the first is the more standard notion of weak solution, while the second incorporates the Calder\'{o}n commutator used in the existence proofs of Marchand \cite{Marchand} and Resnick \cite{Resnick}. 

\begin{definition}[{\textbf{Weak Solutions to $\mathbf{(QG)}$}}]\label{weakqg}
Let $T,R$ be fixed, $\phi\in C^\infty(\mathbb{R}^4)$ compactly supported in $(-T,T)\times(0,R)\times(-R,R)^2$, and $\bar{\phi}\in C^\infty(\mathbb{R}^3)$ compactly supported in $(-T,T)\times(-R,R)^2$. A weak solution $\Psi$ to $(QG)$ on $(0,T)\times\Rthreeplus$ with forcing $f_\nu$, $f_L$ must satisfy

\begin{align}
-\int_0^T \int_0^\infty &\int_{\mathbb{R}^2} \left(\left( \partial_t \phi + \grad^\perp \Psi \cdot \grad \phi  \right) \Delta \Psi + \phi f_L  \right) \,dx\,dz\,dt \nonumber \\
&= \int_0^\infty \int_{\mathbb{R}^2} \phi(0,z,x)\Delta\Psi(0,z,x) \,dx\,dz \label{qg1}
\end{align}

and

\begin{align}
-\int_0^T &\int_{\mathbb{R}^2} \left(\left(  \partial_t \bar{\phi} + \grad^\perp \Psi(t,0,x) \cdot \grad \bar{\phi}\right) \dnu(t,x) + \bar{\phi}f_\nu  \right) \,dx \,dt \nonumber \\
&= \int_{\mathbb{R}^2} \bar{\phi}(0,x)\dnu(0,x)\,dx \label{qg2}
\end{align}
for all $R, \phi, \bar{\phi}$.  For the weak formulation to make sense, we require $\Delta\Psi, \grad^\perp\Psi \Delta\Psi \in L_{loc}^1([0,T]\times\Rthreeplus)$ and $\dnu,\grad^\perp\Psi \dnu \in L_{loc}^1([0,T]\times\mathbb{R}^2)$.
\end{definition}

For functions of two variables, $\bar{\Lambda}\theta = \sqrt[]{-\bar{\Delta}}(\theta)$ and $\bar{\Lambda}^{-1}$ is the corresponding inverse operator. In addition, $$\riesz^\perp \theta = (0,-\riesz_2 \theta, \riesz_1 \theta)$$
is the rotated vector of Riesz transforms with zero first component as usual.  The commutator $[A,B]$ of two operators is $AB - BA$. In the following definition, we use the commutator result of Marchand \cite{Marchand} to define a notion of weak solution for $(QG)$ for low levels of integrability.  Marchand's results concerning boundedness and convergence of the commutator are stated in the preliminaries.  For the sake of brevity we suppress for now issues concerning the frequency support of $\dnu$; these are also addressed in the preliminaries. 

\begin{definition}[{\textbf{Weak Solutions to $\mathbf{(QG)}$ with Commutator}}]\label{weakqgc}
Let $T,R$ be fixed, $\phi\in C^\infty(\mathbb{R}^4)$ compactly supported in $(-T,T)\times(0,R)\times(-R,R)^2$, and $\bar{\phi}\in C^\infty(\mathbb{R}^3)$ compactly supported in $(-T,T)\times(-R,R)^2$. Let $\Psi:[0,T)\times\Rthreeplus\rightarrow\mathbb{R}$ be given and $\Psi_1$ and $\Psi_2$ be defined for all $t\in[0,T)$ by the boundary value problems
\[
 \left\{
       \begin{array}{@{}l@{\thinspace}l}
       \Delta \Psi_{1} = 0 \\
       \dnu_{1} = \dnu
       \end{array}  \right.
\hspace{.3in} 
 \left\{
       \begin{array}{@{}l@{\thinspace}l}
       \Delta \Psi_{2} = \Delta \Psi \\
       \dnu_{2} = 0.
       \end{array}  \right.
\]
 We define $\left(\grad^\perp\Psi(t,0,x) \dnu(t,x)\right)_C$ as a distribution by (and use the notation $(\cdot)_C$ to specify that we are using the commutator formulation) 
\begin{align*}
\int_0^T \int_{\mathbb{R}^2}  &\left( \grad^\perp \Psi(t,0,x) \dnu(t,x) \right)_C \cdot\grad \bar{\phi}  \,dx\,dt := \frac{1}{2} \int_0^T \int_{\mathbb{R}^2} \left( \riesz^\perp (\dnu_1) \right) \cdot \left( [\bar{\Lambda}, \grad\bar{\phi}] (\bar{\Lambda}^{-1}\dnu_1) \right) \,dx\,dt \\
&\qquad\qquad\qquad+ \int_0^T \int_{\mathbb{R}^2}  \left( \grad^\perp \Psi_2(t,0,x) \dnu_1(t,x) \right)\cdot \grad\bar{\phi} \,dx\,dt\\
&=\frac{1}{2} \int_0^T \int_{\mathbb{R}^2} \bigg{(}0, -\mathcal{R}_2 (\partial_\nu \Psi_1), \mathcal{R}_1 (\partial_\nu \Psi_1) \bigg{)} \cdot \bigg{(} \bar{\Lambda}\big{(}\grad\phi \bar{\Lambda}^{-1} ( \partial_\nu \Psi_1 ) \big{)} - \grad\phi \partial_\nu \Psi_1 \bigg{)} \,dx\,dt \\
&\qquad\qquad\qquad + \int_0^T \int_{\mathbb{R}^2}  \left( \grad^\perp \Psi_2(t,0,x) \dnu_1(t,x) \right)\cdot \grad\bar{\phi} \,dx\,dt
\end{align*}
and say that $\Psi$ is a weak solution to $(QG)$ with commutator on $(0,T)\times\Rthreeplus$ with forcing $f_\nu$, $f_L$ if 
\begin{align*}
-\int_0^T \int_0^\infty &\int_{\mathbb{R}^2} \left(\left( \partial_t \phi + \grad^\perp \Psi \cdot \grad \phi  \right) \Delta \Psi + \phi f_L  \right) \,dx\,dz\,dt \nonumber \\
&= \int_0^\infty \int_{\mathbb{R}^2} \phi(0,z,x)\Delta\Psi(0,z,x) \,dx\,dz \label{qg1}
\end{align*}
and
\begin{align*}
-\int_0^T &\int_{\mathbb{R}^2} \left(  \partial_t \bar{\phi} \dnu + \left(\grad^\perp \Psi\dnu\right)_C  \cdot \grad \bar{\phi} + \bar{\phi}f_\nu  \right) \,dx \,dt \nonumber \\
&= \int_{\mathbb{R}^2} \bar{\phi}(0,x)\dnu(0,x)\,dx 
\end{align*}
for all $T, R, \phi, \bar{\phi}$.  For the weak formulation to make sense, we require $\dnu(t)\in L^p(\mathbb{R}^2)$ for all time $t$ and some $p\in(\frac{4}{3},2]$ and $\grad^\perp\Psi_2 \dnu \in L_{loc}^1([0,T]\times\mathbb{R}^2)$.
\end{definition}
See the preliminaries for Marchand's convergence result regarding the commutator and other details.  

We now connect the weak solutions of \cref{weakrqg}, \cref{weakqg}, and \cref{weakqgc}.

\begin{theorem}\label{connectionstheorem}
\begin{enumerate}
\item\label{connectionspartone}  Assume that $\Delta\Psi \in L^\infty\left([0,T);L^q(\Rthreeplus)\right)$ for $q\in[\frac{3}{2},3]$ and $\dnu\in L^\infty\left([0,T);L^p(\mathbb{R}^2)\right)$ for $p\in[2,\infty]$. Then $\nabla\Psi$ satisfies \cref{weakrqg} if and only if $\nabla\Psi$ satisfies \cref{weakqg}. 
\item Assume that $\Delta\Psi \in L^\infty\left([0,T);L^q(\Rthreeplus)\right)$ for $q\in[\frac{3}{2},3]$ and $\dnu\in L^\infty\left([0,T);L^p(\mathbb{R}^2)\right)$ for $p\in(\frac{4}{3},2]$. Assume in addition that $$ p\geq \frac{2q}{3(q-1)}. $$ Then $\nabla\Psi$ satisfies \cref{weakrqg} if and only if $\nabla\Psi$ satisfies \cref{weakqgc}.
\item Assume that $\Delta\Psi \in L^\infty\left([0,T);L^q(\Rthreeplus)\right)$ for $q\in[\frac{3}{2},3]$ and $\dnu\in L^\infty\left([0,T);L^p\cap L^r(\mathbb{R}^2)\right)$ for $p\in(\frac{4}{3},2]$, $r\in[2,\infty]$. Then $\nabla\Psi$ satisfies \cref{weakqg} if and only if $\nabla\Psi$ satisfies \cref{weakqgc}.
\end{enumerate}

\end{theorem}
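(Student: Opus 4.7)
The heart of the theorem is a single integration-by-parts identity: for any smooth test function $\phi(t,z,x)$ with trace $\bar\phi(t,x)=\phi(t,0,x)$, the (rQG) weak expression in \cref{weakrqg} equals the sum of the (QG)$_L$ expression \eqref{qg1} tested against $\phi$ and the (QG)$_\nu$ expression \eqref{qg2} tested against $\bar\phi$. To derive this, one integrates by parts in the normal direction (which produces the boundary contribution at $z=0$) together with the vector identity $\grad^\perp\Psi\cdot\grad(\nabla\phi)=\nabla(\grad^\perp\Psi\cdot\grad\phi)-(\nabla\grad^\perp\Psi)^T\grad\phi$; the pairing of the second term with $\nabla\Psi$ collapses to $\frac{1}{2}\grad^\perp\phi\cdot\grad|\nabla\Psi|^2$, which is a horizontal divergence of a divergence-free field and integrates to zero. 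The forcing contribution splits cleanly using $\Delta F=f_L$ and $\partial_\nu F=f_\nu$.

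Part (1) is then immediate. Specializing $\phi$ to be compactly supported in $\{z>0\}$ (so $\bar\phi\equiv 0$) identifies the (rQG) weak form with \eqref{qg1}. For general $\phi$, once \eqref{qg1} is known, the identity reduces to the statement that the (QG)$_\nu$ bracket vanishes for every trace $\bar\phi$; surjectivity of the trace onto smooth compactly supported functions on $\mathbb{R}^2$ then gives \eqref{qg2}. The converse is immediate by adding the two weak forms. The hypotheses $q\in[\frac{3}{2},3]$ and $p\in[2,\infty]$ ensure, via elliptic estimates for the boundary-value problem $\Delta\Psi=\omega$, $\dnu=\theta$, that $\nabla\Psi\in L^2_{loc}(\Rthreeplus)$, so every product appearing in the identity is absolutely convergent.

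Parts (2) and (3) concern the low-regularity regime $p\in(\frac{4}{3},2]$, where the classical boundary product $\grad^\perp\Psi(t,0,\cdot)\,\dnu$ need not be locally integrable. Using the splitting $\Psi=\Psi_1+\Psi_2$ of \cref{weakqgc}, the trace $\grad^\perp\Psi_1|_{z=0}=\riesz^\perp(\dnu_1)$ is the SQG velocity, while the trace $\grad^\perp\Psi_2|_{z=0}$ belongs to $L^{2q/(3-q)}(\mathbb{R}^2)$ by the sharp Sobolev trace estimate applied to the Neumann problem. The condition $p\geq 2q/(3(q-1))$ in Part (2) is exactly the H\"older dual of this exponent, guaranteeing $\grad^\perp\Psi_2\,\dnu_1 \in L^1_{loc}$; the singular piece $\riesz^\perp(\dnu_1)\,\dnu_1$ is then reinterpreted via Marchand's commutator, yielding the formulation in \cref{weakqgc}. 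The main IBP identity combined with this commutator rewriting gives the equivalence of Part (2). For Part (3), when $\dnu\in L^p\cap L^r$ with $r\geq 2$, the product $\riesz^\perp(\dnu_1)\,\dnu_1$ is classically in $L^{r/2}_{loc}$ and agrees distributionally with the commutator expression, yielding the equivalence between \cref{weakqg} and \cref{weakqgc} directly.

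The principal obstacle is the borderline regularity in Part (2): the Sobolev trace bound on $\Rthreeplus$ must be sharp enough to yield the exponent $p\geq 2q/(3(q-1))$, and Marchand's commutator pairing must be shown to define a valid distributional object that coincides with the classical product whenever the latter is defined. Both rely on the commutator boundedness and approximation results cited in the preliminaries, together with the decay assumptions ($L^{p'}$ for $p=\infty$, $L^{q'}$ for $q=3$) that ensure the Riesz extensions and Poisson integrals are well-defined on the unbounded domain.
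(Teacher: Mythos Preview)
Your strategy is the same as the paper's: both rest on the single integration-by-parts identity linking the (rQG) weak form to the sum of the (QG)$_L$ and (QG)$_\nu$ weak forms. Your explicit algebraic verification of this identity---the product-rule expansion together with the observation that the residual term collapses to $\tfrac12\grad^\perp\phi\cdot\grad|\nabla\Psi|^2$, a horizontal divergence that integrates to zero---is correct and is a detail the paper does not spell out. The treatment of Part~(3), and of the role of the exponent condition $p\geq 2q/(3(q-1))$ in Part~(2), also matches the paper.

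There are, however, two genuine technical steps you have skipped that the paper carries out carefully. First, your identity is written for smooth $\Psi$ (the term $\grad|\nabla\Psi|^2$ makes this explicit), but the theorem concerns $\Psi$ with only $\Delta\Psi\in L^q$ and $\dnu\in L^p$. The paper handles this by extending $\nabla\Psi$ across $\{z=0\}$, mollifying to $\nabla\Psi_{E,\epsilon}$, performing the IBP on the smooth approximants, and then passing to the limit; the point of the hypotheses on $p,q$ is precisely that they make each product in the identity converge (e.g.\ $\grad^\perp\Psi_{E,\epsilon}\Delta\Psi_{E,\epsilon}\to\grad^\perp\Psi\,\Delta\Psi$ in $L^1_{loc}$, and similarly on the boundary). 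Your sentence ``every product appearing in the identity is absolutely convergent'' is not a substitute for this limiting argument. Second, your passage from \eqref{qg1} to \eqref{qg2} is too quick: \eqref{qg1} is only known for test functions vanishing at $z=0$, so for a general $\phi$ you cannot simply subtract it off. The paper resolves this with a vertical cutoff $\gamma_n(z)$, testing (rQG) against $\phi_n=\gamma_n(z)\bar\phi(t,x)$ so that the interior contribution vanishes in the limit while the trace stays fixed; the converse direction likewise requires approximating a general $\phi$ by $(1-\gamma_n)\phi$ to extend \eqref{qg1} up to the boundary. Finally, the decay assumptions you invoke in your last paragraph ($L^{p'}$ when $p=\infty$, $L^{q'}$ when $q=3$) belong to Theorem~\ref{main}, not to this theorem.
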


\cref{connectionstheorem} complements the existence result in \cref{main}.  Indeed, imposing that the initial data $\nabla\Psi_0$, $\Delta \Psi_0$, and $\dnu_0$ all belong to $L^2$, then we recover the result of Puel and Vasseur \cite{pv}.  Imposing $\Delta\Psi_0 \equiv 0$ and $\dnu_0\in L^p(\mathbb{
R}^2)$, we recover the result of Marchand \cite{Marchand}. 

It is interesting to note that if the initial data satisfies $\Delta \Psi_0 \in L^\frac{6}{5}(\Rthreeplus)$ and $\partial_\nu \Psi_0 \equiv 0$ to remove the boundary condition, trace theory would give $\grad^\perp\Psi_0|_{z=0} \in L^\frac{4}{3}(\mathbb{R}^2)$ (see \cref{trace}), corresponding precisely to the lower limit of integrability in the proof of Marchand. Conversely, imposing that $\Delta\Psi_0 \equiv 0$ and $\dnu_0 \in L^\frac{4}{3}(\mathbb{R}^2)$ to eliminate the transport equation for $z>0$, \cref{laxmilgram2} ensures that $\nabla\Psi_0 \in L^2(\Rthreeplus)$. In addition, one can see from the proof of \cref{connectionstheorem} that 
$$ p\geq \frac{2q}{3(q-1)} $$
is the minimum integrability needed to define the nonlinear terms in both $(QG)_L$ and $(QG)_\nu$. Thus, the conditions on $p$ and $q$ correspond in a natural way and appear to be the sharpest possible afforded by the structure of the system.  Furthermore, our analysis combines the reformulation $(rQG)$ of Vasseur and Puel and the commutator of Marchand. In conjunction with the correspondence between the conditions on $p$ and $q$, this naturally connects the two approaches.  

\section{Preliminaries}
\subsection{Definitions and Previous Results}

We collect several definitions and known results as well as state and prove the elliptic estimates necessary for the proof of our main theorems. We begin with an existence result for a regularized (QG) system.  The proof follows the method from \cite{novackvasseur}.  We provide a short summary of the techniques and their implementation here in the appendix. The weak solutions we build to (QG) will be limits of the regularized system described below.  While it would suffice to build "approximate" solutions by adding a stronger diffusion (the method employed by Puel and Vasseur to build approximate solutions \cite{pv}) or by regularizing the velocity fields slightly (the method employed in \cite{boundeddomains}), the following approach has the advantage of providing a brief summary of techniques used in previous work on a closely related system.  

\begin{theorem}[{\textbf{Regularized System}}]\label{regularizedsolutions}
Consider the regularized system $(QG)_\epsilon$
\[
\left\{
       \begin{array}{@{}l@{\thinspace}l}
       \partial_t(\Delta \Psi_\epsilon) + \grad^\perp \Psi_\epsilon \cdot \grad (\Delta \Psi_\epsilon) = f_{L,\epsilon}  \hspace{1.65in}  t>0,\hspace{.1in} z>0,\hspace{.1in} x=(x_1,x_2)\in\mathbb{R}^2 \hspace{.3in} \\
       \partial_t(\dnu_\epsilon)+ \grad^\perp \Psi_\epsilon \cdot \grad(\dnu_\epsilon)= f_{\nu,\epsilon}-\epsilon\half(\dnu_\epsilon) \hspace{.44in}  t>0,\hspace{.1in} z=0,\hspace{.1in} x=(x_1,x_2)\in\mathbb{R}^2 \hspace{.3in}  \\
       \end{array}  \right
.\]
supplied with initial data $\Delta\Psi_{0,\epsilon}$, $\dnu_{0,\epsilon}$ which are $C^\infty$ and compactly supported.  Suppose that $f_{L,\epsilon}\in L^1\left([0,T];L^1 \cap L^q(\Rthreeplus)\right) \cap L^\infty([0,T];C^k(\Rthreeplus))$, $f_{\nu,\epsilon}\in L^1\left([0,T];L^1\cap L^p(\mathbb{R}^2)\right) \cap L^\infty([0,T];C^k(\mathbb{R}^2))$ for all $T>0$, $k\in\mathbb{N}$ and that for each time, $f_{L,\epsilon}$ and $f_{\nu,\epsilon}$ have spatial support contained in $[-\frac{5}{\epsilon},\frac{5}{\epsilon}]^3$ and $[-\frac{5}{\epsilon},\frac{5}{\epsilon}]^2$, respectively.  Then there exists a unique, global in time classical solution $\nabla\Psi_\epsilon$ and a constant $C$ independent of $\epsilon$ such that $\nabla\Psi_\epsilon$ satisfies the energy estimates for $t\in[0,T]$ 
\begin{enumerate}
\item $\| \Delta \Psi_\epsilon(t)\|_{L^q} \leq C (\|f_{L,\epsilon}\|_{L^1([0,T];L^q)} + \| \Delta\Psi_{0,\epsilon} \|_{L^q}) $
\item $\| \dnu_\epsilon(t)\|_{L^p} \leq C (\|f_{\nu,\epsilon}\|_{L^1([0,T];L^p)} + \| \dnu_{0,\epsilon} \|_{L^p}) $
\item $\| \nabla \Psi_\epsilon(t)\|_{L^\frac{3q}{3-q}+L^\frac{3p}{2}} \leq C (\|f_{L,\epsilon}\|_{L^1([0,T];L^q)} + \| \Delta\Psi_{0,\epsilon} \|_{L^q} + \|f_{\nu,\epsilon}\|_{L^1([0,T];L^p)} + \| \dnu_{0,\epsilon} \|_{L^p} ) $
\end{enumerate}
\end{theorem}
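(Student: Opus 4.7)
The plan is to construct $\nabla\Psi_\epsilon$ in three stages: short-time existence of smooth solutions by a standard Picard iteration in a high Sobolev space (with $-\epsilon\half$ only helping, since it is parabolic of order one on the boundary), an $\epsilon$-independent derivation of the energy inequalities (1)--(3), and a continuation argument that combines a Beale--Kato--Majda criterion with the critical boundary dissipation to rule out finite-time blow-up.

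For (1), I would test the interior equation against $|\Delta\Psi_\epsilon|^{q-2}\Delta\Psi_\epsilon$ and integrate over $\Rthreeplus$. Because $\grad^\perp\Psi_\epsilon$ is divergence-free and its first component vanishes, so that there is no normal flux through $\{z=0\}$, the transport term integrates to zero and leaves
$$\tfrac{1}{q}\tfrac{d}{dt}\|\Delta\Psi_\epsilon(t)\|_{L^q}^q \leq \|f_{L,\epsilon}(t)\|_{L^q}\|\Delta\Psi_\epsilon(t)\|_{L^q}^{q-1},$$
from which (1) follows by integration. For (2), the analogous test of the boundary equation against $|\dnu_\epsilon|^{p-2}\dnu_\epsilon$ produces the extra term $\epsilon\int|\dnu_\epsilon|^{p-2}\dnu_\epsilon\,\half\dnu_\epsilon\,dx$, which is nonnegative by the C\'ordoba--C\'ordoba pointwise positivity inequality and can therefore be discarded. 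For (3), at each fixed time $\Psi_\epsilon$ solves the mixed elliptic problem with known interior source $\Delta\Psi_\epsilon$ in $\Rthreeplus$ and Neumann data $-\partial_z\Psi_\epsilon|_{z=0}=\dnu_\epsilon$; the elliptic estimates cited in the preliminaries then decompose $\nabla\Psi_\epsilon$ into a Riesz-potential contribution from the interior source, bounded in $L^{3q/(3-q)}(\Rthreeplus)$ by Hardy--Littlewood--Sobolev, and a harmonic Poisson extension of the boundary data, bounded in $L^{3p/2}(\Rthreeplus)$ by the maximal function estimate on the half-space.

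Local smooth solutions in $H^s$ for $s$ large can be continued as long as $\|\grad^\perp\Psi_\epsilon\|_{L^1_t L^\infty_x}$ remains finite, which via Biot--Savart reduces to controlling $\|\Delta\Psi_\epsilon\|_{L^q\cap L^\infty}$ and $\|\dnu_\epsilon\|_{L^p\cap L^\infty}$ for some large $p,q$. Taking $p,q$ large in (1) and (2) yields all finite Lebesgue norms, and the interior $L^\infty$ bound on $\Delta\Psi_\epsilon$ follows at once from the fact that the interior equation is pure transport by a divergence-free velocity tangent to the boundary. To bound $\|\dnu_\epsilon\|_{L^\infty}$ I would run the De Giorgi iteration of Caffarelli--Vasseur on the level sets of $\dnu_\epsilon$, exploiting the critical dissipation $\epsilon\half$ exactly as in \cite{novackvasseur}; a second De Giorgi step then provides a spatial $C^\alpha$ modulus of continuity, and a standard bootstrap through Schauder estimates propagates this to $C^k$ for every $k$, so the local solution extends globally. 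Uniqueness is obtained by the usual $L^2$ energy method for transport equations with Lipschitz velocity.

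The delicate step, where I would lean on \cite{novackvasseur}, is running the De Giorgi scheme on the boundary in the present geometry: the dissipation acts only on $\{z=0\}$, while the transport velocity comes from a nonlocal elliptic inversion coupling the three-dimensional $\Delta\Psi_\epsilon$ with the trace $\dnu_\epsilon$. The saving observation is that the first component of $\grad^\perp\Psi_\epsilon$ vanishes identically, so the boundary velocity is genuinely two-dimensional; once (1) supplies a time-uniform $L^\infty$ bound on $\Delta\Psi_\epsilon$, its contribution to the boundary drift becomes a tame lower-order perturbation and the critical SQG De Giorgi scheme applies after bookkeeping for the forcing terms and the compactly supported initial data.
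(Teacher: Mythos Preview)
Your overall architecture---local existence, $\epsilon$-independent $L^q$/$L^p$ estimates via divergence-free transport and the C\'ordoba--C\'ordoba inequality, elliptic recovery of $\nabla\Psi_\epsilon$, and a De~Giorgi--bootstrap argument on the boundary---is exactly the one the paper follows, and your derivation of (1)--(3) is correct and matches the appendix almost verbatim.

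There is, however, a genuine gap in the continuation step. The criterion you state, $\|\grad^\perp\Psi_\epsilon\|_{L^1_tL^\infty_x}<\infty$, is too weak to propagate $H^s$ regularity of a transport equation: commutator estimates on $[\partial^\alpha,u\cdot\grad]$ require control of $\nabla u$, not merely $u$. Moreover, even if $L^\infty$ velocity were enough, your reduction ``via Biot--Savart'' fails, since the operators taking $\Delta\Psi_\epsilon$ and $\dnu_\epsilon$ to $\grad^\perp\Psi_\epsilon$ are of Riesz type and do not map $L^\infty$ into $L^\infty$. The paper works instead with the criterion $\grad^\perp\Psi_\epsilon\in L^\infty_{t,z}\Bdot^1_{\infty,\infty}(\mathbb{R}^2)$, which embeds in log-Lipschitz and \emph{does} close the $H^s$ energy estimate. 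To reach that space, the paper bootstraps the De~Giorgi $C^\alpha$ bound on $\dnu_\epsilon$ not through Schauder but through the Duhamel representation with the Poisson kernel (the fundamental solution of $\partial_t+\half$), gaining $C^\alpha\to C^{2\alpha}\to\cdots\to\Bdot^1_{\infty,\infty}$; the contribution of $\Psi_{2,\epsilon}$ is handled separately by noting that $\Delta\Psi_\epsilon\in L^\infty(\Rthreeplus)$ yields $\grad^\perp\Psi_{2,\epsilon}(z,\cdot)\in\Bdot^1_{\infty,\infty}(\mathbb{R}^2)$ uniformly in $z$ via Besov trace estimates.

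Your Schauder bootstrap to $C^k$ on $\dnu_\epsilon$ may well succeed as an alternative route to Lipschitz velocity, but you would then need to state the continuation criterion correctly (log-Lipschitz or $\Bdot^1_{\infty,\infty}$, not $L^\infty$) and explain why the $\Psi_{2,\epsilon}$ piece---driven only by $\Delta\Psi_\epsilon\in L^\infty$, which never improves---still lands in that endpoint space despite the failure of $L^\infty$ boundedness of singular integrals.
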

An outline of the proof following \cite{novackvasseur} is contained in the appendix. Let us now state results of Marchand \cite{Marchand}.
\begin{lemma}[{\textbf{Calder\'{o}n Commutator}}]\label{Marchand} 
\begin{enumerate}
\item For $f\in L^p(\mathbb{R}^2)$, $p\in(\frac{4}{3},2]$, and $\phi \in \mathcal{D}((0,T)\times \mathbb{R}^2) $, $\grad\cdot(f \riesz^\perp f)$ is defined as a distribution by 
$$ \langle \phi, \grad\cdot(f \riesz^\perp f) \rangle := \frac{1}{2} \int_{\mathbb{R}^2} \left(\riesz^\perp f \right)\cdot \left( [\bar{\Lambda}, \grad\phi] \left(\bar{\Lambda}^{-1}f\right)\right). $$
If $f\in L^2(\mathbb{R}^2)$ is such that $\hat{f}$ is zero in a neighborhood of the origin, then 
$$ \int_{\mathbb{R}^2} \left( f \riesz^\perp f \right) \cdot \grad \phi = -\frac{1}{2} \int_{\mathbb{R}^2} \left(\riesz^\perp f \right)\cdot \left( [\bar{\Lambda}, \grad\phi] \left(\bar{\Lambda}^{-1}f\right)\right). $$
\item Let $p\in(\frac{4}{3},\infty]$ and $\{\theta_{\epsilon}(t,x)\}_{\epsilon>0} \subset L^\infty([0,T]; L^p(\mathbb{R}^2))$ be a sequence of functions such that $\theta_\epsilon$ converges weakly-* to $\theta(t,x)\in L^\infty([0,T]; L^p(\mathbb{R}^2))$, $T$ fixed.  Then the following holds in the sense of distributions: 
$$ \lim_{\epsilon\rightarrow 0}{\grad \cdot (\theta_\epsilon \riesz^\perp \theta_\epsilon) = \grad \cdot (\theta \riesz^\perp \theta) }.  $$
Here it is understood that for $p\geq 2$, $\grad\cdot(\theta_\epsilon \riesz^\perp \theta_\epsilon)$ is defined by integration by parts, whereas for $p\leq 2$, we use the commutator.
\end{enumerate}
\end{lemma}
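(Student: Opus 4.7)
For Part 1, I would derive the second identity (the symmetrization formula for $f\in L^2$ with $\widehat f$ vanishing in a neighborhood of the origin) and then extend it to give the distributional definition for $f\in L^p$, $p\in(\tfrac{4}{3},2]$. Set $g=\bar\Lambda^{-1}f\in L^2$, so that $f=\bar\Lambda g$ and $\mathcal{R}^\perp f=\nabla^\perp g$. Using self-adjointness of $\bar\Lambda$ on $L^2$ and $\bar\Lambda\mathcal{R}^\perp=\nabla^\perp$,
\begin{align*}
\int_{\mathbb{R}^2}\mathcal{R}^\perp f\cdot[\bar\Lambda,\nabla\phi]g
&= \int_{\mathbb{R}^2}\mathcal{R}^\perp f\cdot\bar\Lambda(g\nabla\phi) - \int_{\mathbb{R}^2}f\mathcal{R}^\perp f\cdot\nabla\phi \\
&= \int_{\mathbb{R}^2}g\nabla^\perp f\cdot\nabla\phi - \int_{\mathbb{R}^2}f\mathcal{R}^\perp f\cdot\nabla\phi.
\end{align*}
Since $\nabla^\perp(fg)=g\nabla^\perp f+f\nabla^\perp g$ and $\nabla\cdot\nabla^\perp\phi=0$, integration by parts gives $\int g\nabla^\perp f\cdot\nabla\phi=-\int f\mathcal{R}^\perp f\cdot\nabla\phi$, whence $\int f\mathcal{R}^\perp f\cdot\nabla\phi=-\tfrac{1}{2}\int\mathcal{R}^\perp f\cdot[\bar\Lambda,\nabla\phi]\bar\Lambda^{-1}f$. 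To use this as a distributional definition for $f\in L^p$, I need the right-hand side to be bounded: $\mathcal{R}^\perp$ is bounded on $L^p$, Hardy-Littlewood-Sobolev gives $\bar\Lambda^{-1}:L^p\to L^{2p/(2-p)}$, and the Calder\'on commutator theorem gives $[\bar\Lambda,\partial_j\phi]$ bounded on $L^r$ for every $r\in(1,\infty)$ whenever $\phi$ is smooth and compactly supported. H\"older then demands $p/(p-1)\leq 2p/(2-p)$, which rearranges exactly to $p\geq 4/3$. Density of smooth functions with Fourier support away from the origin inside $L^p$ finishes the extension.

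For Part 2, testing against $\phi\in\mathcal{D}((0,T)\times\mathbb{R}^2)$ reduces convergence in distributions to
$$ \int_0^T\!\!\int_{\mathbb{R}^2} \mathcal{R}^\perp\theta_\epsilon\cdot S_\phi\theta_\epsilon\,dx\,dt \longrightarrow \int_0^T\!\!\int_{\mathbb{R}^2} \mathcal{R}^\perp\theta\cdot S_\phi\theta\,dx\,dt, \qquad S_\phi:=[\bar\Lambda,\nabla\phi]\bar\Lambda^{-1}. $$
The operator $S_\phi$ is pseudodifferential of order $-1$ (the commutator is of order $0$, and $\bar\Lambda^{-1}$ is of order $-1$), and the factor $\nabla\phi$ effectively localizes its output in space. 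My plan is to combine the weak-$*$ convergence $\mathcal{R}^\perp\theta_\epsilon\rightharpoonup^*\mathcal{R}^\perp\theta$ in $L^\infty_tL^p_x$ with strong convergence (along a subsequence) of $S_\phi\theta_\epsilon\to S_\phi\theta$ in $L^1_tL^{p'}_x$ on the $t$-support of $\phi$, extracted from a Rellich-Kondrachov argument exploiting the gain of regularity and local support built into $S_\phi$. For $p\geq 2$, where the nonlinearity is defined by plain integration by parts, the analogous argument runs through once one rewrites it via the symmetrization identity from Part 1.

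The main obstacle is the compactness step in Part 2. At each fixed $t$, Rellich-Kondrachov readily provides compactness of $S_\phi$ between the appropriate $L^p$ and $L^{p'}$ on compact spatial sets, but promoting this to strong convergence in $L^1_tL^{p'}_x$ ordinarily requires equicontinuity in time, which is not stipulated by the hypotheses of the lemma alone. The way around is to exploit the bilinear commutator structure directly rather than separating the two factors, arguing with the specific algebraic form of $S_\phi$ to extract the needed convergence from the weak hypotheses - this is the technical heart of Marchand's original proof.
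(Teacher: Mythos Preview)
The paper does not supply its own proof of this lemma. Immediately after stating it, the text reads: ``We refer the reader to Marchand \cite{Marchand} for further details and proofs.'' The only additional remark the paper makes is that in practice one decomposes $f$ via Littlewood--Paley projections so that the commutator formulation is applied only to the high-frequency piece (where $\bar\Lambda^{-1}$ is harmless), a technicality the paper explicitly suppresses. So there is no in-paper argument to compare your proposal against; the lemma is imported wholesale from the cited reference.

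That said, your sketch of Part 1 is the standard symmetrization argument and is correct in substance. One small point: your H\"older condition $p'\le 2p/(2-p)$ only guarantees that the product lies in $L^s_{loc}$ for some $s\ge 1$, not in $L^1(\mathbb{R}^2)$; this is precisely where the high-/low-frequency splitting the paper alludes to enters, ensuring integrability at infinity.

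For Part 2 you correctly isolate the real obstacle --- the gain of one derivative in $S_\phi$ yields compactness in $x$ at each fixed $t$, but promoting this to joint $(t,x)$ strong convergence from mere weak-$*$ convergence in $L^\infty_tL^p_x$ is not automatic. Your final paragraph defers exactly this point back to ``Marchand's original proof,'' which is in effect what the paper itself does. So your proposal is consistent with the paper's treatment, though neither you nor the paper actually closes the argument here.
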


Decomposing an arbitrary function using Littlewood-Paley projections allows one to use the commutator only for the high-frequency piece.  To avoid cumbersome Besov space notations, we suppress these details and will write 
$$ \riesz^\perp\theta [\bar{\Lambda}, \grad\phi] \left(\bar{\Lambda}^{-1}\theta\right) $$ 
for any $L^p$ function with $p\in(\frac{4}{3},2]$.  We refer the reader to Marchand \cite{Marchand} for further details and proofs.  

For the proof of \cref{conservation}, we shall need several identities, definitions, and notations concerning Littlewood-Paley decompositions and Besov spaces. The homogeneous Besov spaces $\Balpha(\mathbb{R}^2)$ are defined via the usual bi-infinite sequence of homogeneous Littlewood-Paley decompositions (per the text of Bahouri, Chemin, and Danchin \cite{bcd}).  Here $\{\gamma_\epsilon \}_{\epsilon>0}$ is a sequence of compactly supported, radially symmetric approximate identities.  For a function $u:\mathbb{R}^2 \rightarrow \mathbb{R}^n$, we define $u^\epsilon := u \ast \gamma_\epsilon$.
\begin{prop}\label{lpstuff}
\begin{enumerate}
\item\label{dos} For $L^1_{loc}$ functions $f$ and $g$, $$ \int_{\mathbb{R}^2} g (f^\epsilon)^\epsilon = \int_{\mathbb{R}^2} g^\epsilon f^\epsilon $$
\item\label{tres} The following commutator identity holds:
\begin{align*}
(f\cdot g)^\epsilon(x) - &f^\epsilon g^\epsilon(x) =\\
&\int_{\mathbb{R}^2} \int_{\mathbb{R}^2} \left( f(x-\bar{x}) - f(x) \right)\gamma_\epsilon(\bar{x}) \left( g(x-\bar{x}) - g(x-x') \right) \gamma_\epsilon(x') \,dx'\,d\bar{x}
\end{align*}
\item\label{cuatro}  For $\alpha\in(0,1)$ and $u \in \Balpha(\mathbb{R}^2)$, there exists $C$ independent of $u$ such that for all $|y|>0$,
$$ \|u(\cdot - y) - u(\cdot)\|_{L^3(\mathbb{R}^2)} \leq C y^\alpha \|u\|_{\Balpha(\mathbb{R}^2)} $$
and
$$ \|\grad u^\epsilon\|_{L^3(\mathbb{R}^2)} \leq C \epsilon^{\alpha-1} \|u\|_{\Balpha(\mathbb{R}^2)}. $$
\end{enumerate}
\end{prop}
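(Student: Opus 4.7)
The proposition bundles three routine but essential mollifier identities; I would handle the three parts independently since they share no real structure.

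For part \eqref{dos}, the plan is to invoke symmetry of the kernel. Since $\gamma_\epsilon$ is radially symmetric, $\gamma_\epsilon(y-x) = \gamma_\epsilon(x-y)$, so convolution with $\gamma_\epsilon$ is self-adjoint with respect to the $L^2$ pairing. Writing out $(f^\epsilon)^\epsilon(x) = \iint f(x-\bar{x}-x')\gamma_\epsilon(\bar{x})\gamma_\epsilon(x')\,d\bar{x}\,dx'$ and applying Fubini together with the change of variables $y = x-\bar{x}$ in the outer integral moves one copy of $\gamma_\epsilon$ onto $g$ and leaves the other on $f$, producing the claimed equality $\int g(f^\epsilon)^\epsilon = \int g^\epsilon f^\epsilon$. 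The $L^1_{loc}$ assumption ensures that every integral in the Fubini step is absolutely convergent once one truncates to the compact support of $\gamma_\epsilon$ and of $g$ (and the identity is to be read under an appropriate support/integrability hypothesis).

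For part \eqref{tres}, I would simply expand the right-hand side. Multiplying out the product $(f(x-\bar{x})-f(x))(g(x-\bar{x})-g(x-x'))$ gives four terms; using $\int \gamma_\epsilon = 1$ on the auxiliary variable whenever it does not appear in the integrand collapses three of them and leaves precisely
\begin{equation*}
\iint f(x-\bar x) g(x-\bar x) \gamma_\epsilon(\bar x) \gamma_\epsilon(x') \,d\bar x\,dx' - \iint f(x-\bar x)\gamma_\epsilon(\bar x)\,g(x-x')\gamma_\epsilon(x')\,d\bar x \,dx',
\end{equation*}
which equals $(f\cdot g)^\epsilon(x) - f^\epsilon(x) g^\epsilon(x)$. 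Nothing beyond bookkeeping is required.

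For part \eqref{cuatro}, the first inequality is a standard characterization of $\Balpha$: the norm $\sup_{j} 2^{j\alpha} \|\Delta_j u\|_{L^3}$ is equivalent (for $\alpha\in(0,1)$) to $\sup_{|y|>0} |y|^{-\alpha}\|u(\cdot-y)-u(\cdot)\|_{L^3}$, and I would quote this from Bahouri--Chemin--Danchin. For the gradient estimate, the key trick is the zero-mean property $\int \grad \gamma_\epsilon = 0$, which lets me insert $-u(x)$ for free:
\begin{equation*}
\grad u^\epsilon(x) = \int \bigl( u(x-y) - u(x) \bigr) \grad \gamma_\epsilon(y)\,dy.
\end{equation*}
Minkowski's integral inequality in $L^3_x$, followed by the first inequality applied pointwise in $y$, gives
\begin{equation*}
\|\grad u^\epsilon\|_{L^3} \leq C\|u\|_{\Balpha} \int |y|^\alpha |\grad \gamma_\epsilon(y)|\,dy \leq C \|u\|_{\Balpha} \epsilon^{\alpha-1},
\end{equation*}
the last step using compact support of $\gamma$ and the scaling $\grad\gamma_\epsilon(y) = \epsilon^{-3}(\grad\gamma)(y/\epsilon)$ in two dimensions.

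None of the three parts is genuinely difficult; the only mild subtlety is invoking the correct Besov equivalence in part \eqref{cuatro}, and the only place signs or variables can go wrong is the four-term expansion in part \eqref{tres}. I would therefore keep the proof short and just record the algebra.
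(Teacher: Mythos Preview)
Your proposal is correct and tracks the paper's own proof closely: part~(1) is handled identically via self-adjointness of convolution with a symmetric kernel, and part~(2) is the same four-term bookkeeping (the paper starts from the left-hand side and factors, whereas you expand the right-hand side, but the algebra is identical). For part~(3) you actually supply more than the paper does --- the paper simply cites Bahouri--Chemin--Danchin, while you sketch the finite-difference characterization and the Minkowski/zero-mean argument for the gradient bound --- and your computation, including the $\epsilon^{\alpha-1}$ scaling in two dimensions, is correct.
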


\begin{proof}
(1) follows immediately from a change of variables and the radial symmetry of the mollifier.  For (2), we can write 
\begin{align*}
(f\cdot g)^\epsilon(x) - &f^\epsilon g^\epsilon(x) = \int_{\mathbb{R}^2} f(x-\bar{x})g(x-\bar{x})\gamma_\epsilon(\bar{x})\,d\bar{x} \\ 
&\qquad \qquad - \int_{\mathbb{R}^2} f(x-\bar{x})\gamma_\epsilon(\bar{x}) \,d\bar{x} \cdot \int_{\mathbb{R}^2} g(x-x')\gamma_\epsilon(x') \,dx' \\
&= \int_{\mathbb{R}^2} f(x-\bar{x}) \gamma_\epsilon(\bar{x}) \left( g(x-\bar{x}) - \int_{\mathbb{R}^2} g(x-x')\gamma_\epsilon(x') \,dx' \right)  \,d\bar{x}\\
&= \int_{\mathbb{R}^2} \int_{\mathbb{R}^2} \left( f(x-\bar{x}) - f(x) \right)\gamma_\epsilon(\bar{x}) \left( g(x-\bar{x}) - g(x-x') \right) \gamma_\epsilon(x') \,dx'\,d\bar{x}
\end{align*}
Statements and proofs of (3) can be found in the text of Bahouri, Chemin, and Danchin \cite{bcd}.
\end{proof}

The homogeneous Sobolev spaces are defined by
$$ \Wdot^{1,r}(\Rthreeplus) := \{ u \in \mathcal{D}'(\Rthreeplus) | \nabla u \in L^r(\Rthreeplus) \} $$
with norm 
$$ \|u\|_{\Wdot^{1,r}(\Rthreeplus)} = \|\nabla u \|_{L^r(\Rthreeplus)}. $$
Strictly speaking, for the norm to be well-defined and for the following inequality to hold, we consider equivalence classes of distributions which differ by an additive constant.  Let us recall the classical Escobar inequality for the half-space $\Rthreeplus$ \cite{Esc}. 

\begin{lemma}\label{trace}
Suppose that $q\in[1,3)$, and $u\in \Wdot^{1,q}(\Rthreeplus)$.  Then 
$$ \|u|_{z=0}\|_{L^\frac{2q}{3-q}(\mathbb{R}^2)} \leq C(q) \|u\|_{\Wdot^{1,q}(\Rthreeplus)} $$
\end{lemma}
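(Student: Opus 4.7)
The plan is to follow the classical chain: fundamental theorem of calculus in the $z$ variable, Hölder's inequality, and the homogeneous Sobolev embedding on the half-space. The critical exponents line up exactly, which is the reason the inequality works.

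First I would reduce to the case $u\in C^\infty_c(\overline{\Rthreeplus})$ by density. Any $u\in \Wdot^{1,q}(\Rthreeplus)$ has, after choosing the representative that vanishes at infinity, a well-defined $L^{3q/(3-q)}$ integrability from the half-space Sobolev inequality (valid since $q<3$), so one can approximate in both $\|\nabla\cdot\|_{L^q}$ and $\|\cdot\|_{L^{3q/(3-q)}}$, and the trace map is continuous in the resulting topology.

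For $q\in(1,3)$, set $p:=\frac{2q}{3-q}$ and write, for smooth compactly supported $u$,
\begin{equation*}
|u(x,0)|^p = -\int_0^\infty \partial_z\bigl(|u(x,z)|^p\bigr)\,dz \leq p \int_0^\infty |u(x,z)|^{p-1} |\partial_z u(x,z)|\,dz.
\end{equation*}
Integrating in $x\in\mathbb{R}^2$ and applying Hölder with exponents $q'=\frac{q}{q-1}$ and $q$ in the $(x,z)$ variables gives
\begin{equation*}
\|u|_{z=0}\|_{L^p(\mathbb{R}^2)}^p \leq p \, \|u\|_{L^{(p-1)q'}(\Rthreeplus)}^{p-1}\,\|\partial_z u\|_{L^q(\Rthreeplus)}.
\end{equation*}
The key arithmetic is that $(p-1)q' = \frac{3(q-1)}{3-q}\cdot\frac{q}{q-1} = \frac{3q}{3-q} =: p^*$, which is precisely the Sobolev conjugate exponent in three dimensions. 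Invoking the homogeneous Sobolev embedding $\Wdot^{1,q}(\Rthreeplus)\hookrightarrow L^{p^*}(\Rthreeplus)$ (classical, proved by extension by reflection across $\{z=0\}$ and the full-space Gagliardo--Nirenberg--Sobolev inequality), we bound $\|u\|_{L^{p^*}}\leq C\|\nabla u\|_{L^q}$ and obtain
\begin{equation*}
\|u|_{z=0}\|_{L^p(\mathbb{R}^2)}^p \leq C\,\|\nabla u\|_{L^q(\Rthreeplus)}^{p-1}\,\|\partial_z u\|_{L^q(\Rthreeplus)}\leq C\,\|\nabla u\|_{L^q(\Rthreeplus)}^p,
\end{equation*}
which is the desired inequality after taking $p$-th roots.

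The borderline case $q=1$ (which forces $p=1$) is degenerate in the above Hölder step, so I would dispatch it separately: the fundamental theorem of calculus gives $|u(x,0)|\leq \int_0^\infty|\partial_z u(x,z)|\,dz$ directly, and Fubini then yields $\|u|_{z=0}\|_{L^1(\mathbb{R}^2)} \leq \|\partial_z u\|_{L^1(\Rthreeplus)} \leq \|\nabla u\|_{L^1(\Rthreeplus)}$. The genuinely delicate point of the argument is not the chain of inequalities, which is routine once the exponents are pinned down, but rather the justification of the density/approximation step for homogeneous Sobolev functions on the half-space; the trick is that, thanks to Sobolev embedding, one always has a canonical representative lying in $L^{p^*}(\Rthreeplus)$, and the trace on $\{z=0\}$ is continuous with respect to this joint topology, so the inequality extends from $C^\infty_c(\overline{\Rthreeplus})$ to all of $\Wdot^{1,q}(\Rthreeplus)$.
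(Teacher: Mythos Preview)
Your argument is correct: the fundamental theorem of calculus in $z$, H\"older in $(x,z)$, and the identification $(p-1)q'=3q/(3-q)$ with the 3D Sobolev conjugate are exactly the right chain, and the $q=1$ endpoint is handled cleanly. The paper, however, does not prove this lemma at all; it simply quotes it as the classical Escobar trace inequality \cite{Esc} and moves on. So there is nothing to compare at the level of strategy---you have supplied a self-contained proof where the paper invokes a reference. Your remark about the density step for homogeneous Sobolev functions is well placed; that is indeed the only point requiring care, and using the canonical $L^{3q/(3-q)}$ representative (modulo constants) to make the approximation and the trace continuous is the standard way to close it.
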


\subsection{Elliptic Estimates}

We now specify the appropriate Lebesgue spaces and obtain the corresponding bounds for the solution to the Poisson problem with Neumann boundary data in the upper half space.  While the results are standard, we include proofs for the sake of completeness.  We also include a technical lemma which will be useful in the proof of \cref{main}.
\begin{lemma}\label{laxmilgram1}
Given $f \in L^q(\Rthreeplus)$ for $q\in(1,3]$, there exists a unique $u\in\Wdot^{1,\frac{3q}{3-q}}(\Rthreeplus)$ ($\nabla u \in BMO$ if $q=3$) such that 
\[
 \left\{
       \begin{array}{@{}l@{\thinspace}l}
       -\Delta u = f  \qquad z>0 \\
       \partial_\nu u = 0 \hspace{.45in} z=0
       \end{array}  \right.
       \]
with $$ \|\nabla u \|_{L^\frac{3q}{3-q}(\Rthreeplus)} \leq C(q) \| f \|_{L^q(\Rthreeplus)}, \qquad q<3 $$ 
or
$$  \|\nabla u \|_{BMO(\Rthreeplus)} \leq C(q) \| f \|_{L^q(\Rthreeplus)}, \qquad q=3 . $$
\end{lemma}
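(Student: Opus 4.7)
The plan is to reduce to the whole space by even reflection. Given $f\in L^q(\Rthreeplus)$, define $\tilde{f}$ on $\mathbb{R}^3$ by setting $\tilde{f}(x,z)=f(x,|z|)$; this is even in $z$ and satisfies $\|\tilde{f}\|_{L^q(\mathbb{R}^3)} = 2^{1/q}\|f\|_{L^q(\Rthreeplus)}$. Then I would take $\tilde{u}$ to be the Newton potential on $\mathbb{R}^3$, i.e.\ the convolution of $\tilde{f}$ with the fundamental solution of $-\Delta$. Because $\tilde{f}$ is even in $z$ and the fundamental solution is radially symmetric, $\tilde{u}$ is also even in $z$, so $\partial_z\tilde{u}|_{z=0}=0$ automatically. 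Setting $u:=\tilde{u}|_{\Rthreeplus}$ then yields a solution of the Neumann problem.

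For the quantitative bound, I would write $\nabla \tilde{u}=\mathcal{R}(-\Delta)^{-1/2}\tilde{f}$, where $\mathcal{R}$ denotes the vector of Riesz transforms on $\mathbb{R}^3$. For $q\in(1,3)$, the Hardy--Littlewood--Sobolev inequality gives
\[
\|(-\Delta)^{-1/2}\tilde{f}\|_{L^{3q/(3-q)}(\mathbb{R}^3)} \leq C(q)\|\tilde{f}\|_{L^q(\mathbb{R}^3)},
\]
and Calder\'on--Zygmund theory ensures that $\mathcal{R}$ is bounded on $L^{3q/(3-q)}$. Combining these and restricting to $\Rthreeplus$ gives the desired estimate. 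For the endpoint $q=3$, I would invoke the classical Sobolev embedding at the limiting exponent, namely $(-\Delta)^{-1/2}:L^3(\mathbb{R}^3)\to\BMO(\mathbb{R}^3)$, together with the fact that the Riesz transforms are bounded on $\BMO$; this yields $\|\nabla\tilde{u}\|_{\BMO(\mathbb{R}^3)}\leq C\|\tilde{f}\|_{L^3(\mathbb{R}^3)}$, which transfers to the desired bound on $\Rthreeplus$.

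Uniqueness is handled modulo additive constants (so that $\|\cdot\|_{\Wdot^{1,r}}$ is a genuine norm). If $u_1,u_2$ are two solutions in the stated class, their difference $v$ satisfies $\Delta v=0$ in $\Rthreeplus$ with $\partial_\nu v=0$ on $\{z=0\}$, and its even reflection $\tilde{v}$ is harmonic on all of $\mathbb{R}^3$ with $\nabla\tilde{v}\in L^{3q/(3-q)}(\mathbb{R}^3)$ (respectively $\BMO(\mathbb{R}^3)$ when $q=3$). A harmonic function whose gradient lies in $L^r$ for finite $r$ must have constant gradient (apply the mean value property to each partial derivative and let the radius of the ball go to infinity); this forces $\nabla v\equiv 0$ by integrability. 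In the $\BMO$ case, the same mean-value argument applied to $\partial_i\tilde{v}$ together with the John--Nirenberg control of the oscillation again yields $\nabla\tilde{v}\equiv\text{const}$, and then by scaling invariance of $\BMO$ this constant must be zero.

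The main obstacle I anticipate is the treatment of the endpoint $q=3$, both for existence (availability of the $L^3\to\BMO$ mapping property of $(-\Delta)^{-1/2}$ on $\mathbb{R}^3$, and of the Riesz transform on $\BMO$) and for uniqueness (justifying that a harmonic function in $\mathbb{R}^3$ with $\BMO$ gradient must be affine). Everything in the subcritical range $q\in(1,3)$ should follow from standard HLS and Calder\'on--Zygmund estimates combined with the reflection trick, and the Neumann condition is automatic because of the even symmetry of $\tilde{u}$.
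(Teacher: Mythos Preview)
Your approach is correct and, for the subcritical range $q\in(1,3)$, genuinely different from the paper's. The paper splits into cases: for $q=3$ it does essentially what you do (apply the multiplier $i\xi/|\xi|^2$ to the even extension of $f$, using the $L^3\to\BMO$ bound), but for $q<3$ it proves existence and the estimate via the generalized Lax--Milgram theorem in the dual pair $\Wdot^{1,\frac{3q}{3-q}}\times\Wdot^{1,(\frac{3q}{3-q})'}$, establishing coercivity of the bilinear form $B(u,v)=\int\nabla u\cdot\nabla v$ by means of the Leray-type projection $\Pgrad$ onto gradients and a careful symmetric-extension argument. Your route---even reflection to $\mathbb{R}^3$, then HLS for $(-\Delta)^{-1/2}$ plus Calder\'on--Zygmund for $\mathcal{R}$---is more elementary and gives the same bound directly; the trade-off is that Lax--Milgram delivers uniqueness in $\Wdot^{1,\frac{3q}{3-q}}$ automatically, whereas you need a separate Liouville argument (which you supply). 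One small point: for $q\ge 3/2$ the Newton potential $\tilde u$ itself is only defined modulo polynomials, so it is cleaner to define $\nabla\tilde u$ directly as the multiplier $i\xi/|\xi|^2$ acting on $\tilde f$ (exactly as the paper does in the $q=3$ case) and then recover $u$ from curl-freeness. Your flagged worry about uniqueness at $q=3$ is legitimate---a constant vector is zero in $\BMO$, so ``$\nabla v\equiv\text{const}$ forces $\nabla v=0$'' does not follow from the scaling argument you sketch---but the paper does not address uniqueness in the $q=3$ case either.
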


\begin{proof}
Let us begin with the case $q=3$.  Applying the operator whose symbol is $\frac{i\xi}{|\xi|^2}$ (we ignore constants coming from the Fourier transform) to 
\begin{displaymath}
   f_E(z,x) = \left\{
     \begin{array}{lr}
       f(z,x) &  z>0\\
       0 &  z\leq 0
     \end{array}
   \right.
\end{displaymath} 
gives a curl free vector field in $BMO(\mathbb{R}^3)$ which is in fact the gradient of a function $u_E$ (see, for example, Temam \cite{Temam2001}).  Then applying the same operator to 
\begin{displaymath}
   f_{E,r}(z,x) = \left\{
     \begin{array}{lr}
       0 &  z>0\\
       f(-z,x) &  z\leq 0
     \end{array}
   \right.
\end{displaymath} 
yields a vector field in $BMO(\mathbb{R}^3)$ which is again the gradient of a function $u_{E,r}$.  Putting $u=u_E + u_{E,r}$, it is clear that $-\Delta u =f $ in $\Rthreeplus$ and $$\partial_\nu u = \partial_\nu u_E + \partial_\nu u_{E,r} = \partial_\nu u_E - \partial_\nu u_E = 0.$$
The bound follows from the boundedness of the multiplier operator from $L^3(\mathbb{R}^3)$ to $BMO(\mathbb{R}^3)$.

We use the generalized Lax-Milgram theorem for Banach spaces (see for example Theorem 8.10 in the text of Arbogast and Bona \cite{arbogastandbona}) to show the existence as well as the bound for $q<3$.  Define $X:= \Wdot^{1,\frac{a}{a-1}}(\Rthreeplus)$ and $ Y:= \Wdot^{1,a}(\Rthreeplus)$.  Define $B:X \times Y \rightarrow \mathbb{R}$ by 
$$ B(u,v) = \int_{\Rthreeplus} \nabla u \cdot \nabla v   $$ 
and $F(v): Y \rightarrow \mathbb{R}$ by 
$$ F(v) = \int_{\Rthreeplus} vf  .$$
Choosing $a=\frac{3q}{4q-3}$ gives that $v\in L^\frac{q}{q-1}(\Rthreeplus)$ by Sobolev embedding, and thus $F$ is well-defined and continuous.  Continuity of $B$ follows from H\"{o}lder's inequality.  We must show $B$ to be non-degenerate, i.e. 
$$ \sup_{u\in X} B(u,v) >0  \hspace{.2in} \forall v\in Y $$
and coercive, i.e.
$$ \inf_{\substack{u\in X \\ \|u\|=1} }  \sup_{\substack{v\in Y \\ \|v\|=1}} B(u,v)  \geq \gamma >0.$$
To show coercivity, we begin by fixing $u\in X$ with $\|u\|_{\Wdot^{1, \frac{a}{a-1}}}=1$.  The ideal choice for $\nabla v$ would be $\nabla u |\nabla u|^{\frac{a}{a-1}-2}$.  Of course, this may not be the gradient of a function.  Therefore, let us define the operator $\Pgrad$ for Schwartz vector fields $s:\mathbb{R}^3 \rightarrow \mathbb{R}^3$ by 
\begin{align*}
\widehat{\Pgrad (s)}(\xi) &= \left( \frac{\langle \hat{s}(\xi), \xi \rangle}{|\xi|^2} \xi \right) \\
&= \left(  \sum_{i=1}^3 \hat{s_i}(\xi) \frac{\xi_i \xi_j}{|\xi|^2}\right), \hspace{.4in} j=1,2,3.\\
\end{align*}
Recalling that the symbol for the $j^{th}$ Riesz transform $\mathcal{R}_j$ is $-\frac{i\xi_j}{|\xi|}$, $\Pgrad$ is a linear combination of compositions of Riesz transforms.  We then extend $\Pgrad$ by density as a bounded operator from $\left(L^r(\mathbb{R}^3)\right)^3$ to itself for all $r\in(1,\infty)$.  In addition, for $s_1$ scalar valued, $s_2$ vector valued  Schwarz functions, examining the symbol of $\Pgrad$ shows that
$$ \langle \nabla s_1, \Pgrad s_2\rangle = \langle \nabla s_1, s_2\rangle. $$
Continuity of the operator ensures that this property remains true for vector fields in $X$ and $Y$.  We define $ u_E(z,x) = u(|z|,x)$ to be the symmetric extension of $u$ over the plane $z=0$.  With this definition, 
\begin{equation}\label{symu1}
\partial_z u_E(z,x) = -\partial_z u_E(-z,x)
\end{equation}
and 
\begin{equation}\label{symu2}
\grad u_E(z,x) = \grad u_E(-z,x).
\end{equation}
We apply $\Pgrad$ to the extended vector field $\nabla u_E |\nabla u_E|^{\frac{a}{a-1}-2}$.  Using the symmetry and antisymmetry of the Riesz transforms and $\nabla u_E |\nabla u_E|^{\frac{a}{a-1}-2}$ with respect to reflection over the plane $z=0$, it is simple to check that 
\begin{equation}\label{sym1}
\partial_z\Pgrad \left( \nabla u_E |\nabla u_E|^{\frac{a}{a-1}-2}\right)(z,x) = -\partial_z \Pgrad \left(\nabla u_E |\nabla u_E|^{\frac{a}{a-1}-2}\right)(-z,x) 
\end{equation}
and 
\begin{equation}\label{sym2}
\grad \Pgrad \left(\nabla u_E |\nabla u_E|^{\frac{a}{a-1}-2}\right)(z,x) = \grad \Pgrad \left(\nabla u_E |\nabla u_E|^{\frac{a}{a-1}-2}\right)(-z,x).
\end{equation}

We set $\nabla v(z,x) = \frac{1}{\|\Pgrad\|}\Pgrad (\nabla u_E |\nabla u_E|^{\frac{a}{a-1}-2}) |_{z\geq 0}$.  By direct computation, $\|\nabla v \|_{L^a(\Rthreeplus)} \leq 1$, and using \eqref{symu1}, \eqref{symu2}, \eqref{sym1}, and \eqref{sym2} gives that
\begin{align*}
\int_{\Rthreeplus} \nabla u \cdot \nabla v &= \frac{1}{2\|\Pgrad\|} \int_{\mathbb{R}^3} \nabla u_E \cdot \Pgrad(\nabla u_E |\nabla u_E|^{\frac{a}{a-1}-2})\\
&= \frac{1}{2\|\Pgrad\|} \int_{\mathbb{R}^3} |\nabla u_E|^\frac{a}{a-1}\\
&= \frac{1}{\|\Pgrad\|}.\\
\end{align*}
Thus the coercivity is shown with $\gamma = \frac{1}{\|\Pgrad\|}$.  Non-degeneracy follows from switching $u$ and $v$ and repeating the argument.  Therefore, the conditions of Lax-Milgram are met, and we have the existence of a solution $u$ to the  variational problem, as well as the gradient bound on $u$ in terms of $f$.  Then, taking $v$ to be compactly supported in $\Rthreeplus$ shows that $-\Delta u = f$ in the sense of distributions.  Now, taking $v\in \mathcal{D}(\mathbb{R}^3)$ shows that $\partial_\nu u$ is well defined as a distribution by $$ \int_{\Rthreeplus} \nabla u \cdot \nabla v + v \Delta u =: \int_{\mathbb{R}^2} v \partial_\nu u  $$
and is equal to zero.  
\end{proof}

For the following lemma we use the space $$\Wdot_{\Delta}^{1,p}(\Rthreeplus) := \{ u \in \Wdot^{1,p}(\Rthreeplus) | \Delta u = 0 \textit{ in } \mathcal{D}'(\Rthreeplus) \}$$
with norm 
$$ \|u\|_{\Wdot_{\Delta}^{1,p}(\Rthreeplus)} = \|\nabla u \|_{L^p(\Rthreeplus)} $$

\begin{lemma}\label{laxmilgram2}
Given $g \in  L^p(\mathbb{R}^2)$ for $p\in (1, \infty]$, there exists $u \in \Wdot^{1, \frac{3p}{2}}_{\Delta}(\Rthreeplus)$ solving 
\[
 \left\{
       \begin{array}{@{}l@{\thinspace}l}
       \Delta u = 0 \qquad z>0\\
      \partial_\nu u = g \hspace{.3in} z=0
       \end{array}  \right.
       \]
with 
$$ \|\nabla u\|_{L^\frac{3p}{2}(\Rthreeplus)} \leq C(p) \|g\|_{L^p(\mathbb{R}^2)}, \qquad p<\infty $$
or
$$ \|\nabla u\|_{BMO(\Rthreeplus)} \leq C(p) \|g\|_{L^p(\mathbb{R}^2)}, \qquad p=\infty.  $$
\end{lemma}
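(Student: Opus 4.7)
The plan is to mirror the proof of \cref{laxmilgram1}: use the generalized Lax-Milgram theorem for Banach spaces when $p \in (1,\infty)$, and construct $u$ explicitly at the endpoint $p = \infty$. For $p$ finite, set $X := \Wdot^{1,3p/2}(\Rthreeplus)$ and $Y := \Wdot^{1,a}(\Rthreeplus)$ with $a := (3p/2)' = 3p/(3p-2)$. A direct check gives $a \in [1,3)$ precisely when $p > 1$ and $2a/(3-a) = p'$. I would then define the bilinear form $B(u,v) := \int_{\Rthreeplus} \nabla u \cdot \nabla v$ on $X \times Y$ and the linear functional $F(v) := \int_{\mathbb{R}^2} v(0,x)\,g(x)\,dx$ on $Y$. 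Continuity of $B$ is H\"older's inequality, while continuity of $F$ follows from pairing $g \in L^p(\mathbb{R}^2)$ against the boundary trace via the Escobar inequality (\cref{trace}) with the exponents matched as above.

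For coercivity and non-degeneracy I would reuse the projection $\Pgrad$ introduced in the proof of \cref{laxmilgram1}. Given $u \in X$ with $\|u\|_X = 1$, the natural test field $\nabla u\,|\nabla u|^{3p/2-2}$ is not a priori a gradient, so I would extend $u$ evenly across $\{z=0\}$ to $u_E$ and set $\nabla v := \|\Pgrad\|^{-1}\,\Pgrad\bigl(\nabla u_E\,|\nabla u_E|^{3p/2-2}\bigr)\big|_{z>0}$. The horizontal components of $\nabla u_E\,|\nabla u_E|^{3p/2-2}$ are even in $z$ and the vertical component is odd, and $\Pgrad$ preserves these parities, so the restriction to $\Rthreeplus$ realizes a legitimate gradient on the half-space. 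Boundedness of $\Pgrad$ on $L^a(\mathbb{R}^3)$ then yields $\|v\|_Y \leq 1$, and the identity $\langle \nabla s_1, \Pgrad s_2\rangle = \langle \nabla s_1, s_2\rangle$ gives $B(u,v) \geq \tfrac{1}{2}\|\Pgrad\|^{-1}$, with the factor $\tfrac{1}{2}$ coming from folding the $\mathbb{R}^3$-integral back to $\Rthreeplus$. Swapping $u$ and $v$ handles non-degeneracy, so Lax-Milgram produces $u \in X$ with the claimed bound. Restricting test functions to $\mathcal{D}(\Rthreeplus)$ identifies $\Delta u = 0$ in the interior, while allowing general $v \in \mathcal{D}(\mathbb{R}^3)$ defines $\partial_\nu u$ as a distribution on $\mathbb{R}^2$ equal to $g$.

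For the endpoint $p = \infty$ the Lax-Milgram setup is no longer available, so I would construct $u$ directly from the Poisson semigroup by declaring
\[
\nabla u(z,x) := e^{-z\bar\Lambda}\bigl(-\mathcal{R}_1 g(x),\,-\mathcal{R}_2 g(x),\,-g(x)\bigr),
\]
which is readily checked to be harmonic in $\Rthreeplus$ and to satisfy $\partial_\nu u|_{z=0} = g$. Since $g \in L^\infty(\mathbb{R}^2) \subset \BMO(\mathbb{R}^2)$ and the Riesz transforms map $L^\infty$ into $\BMO$, the boundary data is a $\BMO$ vector field, and the Fefferman--Stein characterization of $\BMO$ via Poisson extensions then yields $\|\nabla u\|_{\BMO(\Rthreeplus)} \leq C\,\|g\|_{L^\infty}$.

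The step I expect to be the main obstacle is the coercivity argument: tracking the reflection symmetries through $\Pgrad$ carefully enough to conclude that the restriction of the projected field to $\Rthreeplus$ is both a legitimate gradient on the half-space and has its $L^a(\Rthreeplus)$-norm controlled by $\|u\|_X$. This was the most delicate bookkeeping in \cref{laxmilgram1}, and since the exponent $\tfrac{3p}{2}-2$ can be either small (for $p$ near $1$) or arbitrarily large, some care is needed with the power-type nonlinearity. The remaining pieces --- continuity of $B$ and $F$, the Escobar exponent matching, recovery of the PDE and boundary condition from the variational identity, and the $p=\infty$ endpoint --- should follow the template of \cref{laxmilgram1} with only cosmetic modifications.
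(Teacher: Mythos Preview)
Your proposal is correct and matches the paper's approach closely: the same Lax--Milgram setup with $Y = \Wdot^{1,3p/(3p-2)}(\Rthreeplus)$, the Escobar trace to make $F$ continuous, the $\Pgrad$-via-even-reflection argument for coercivity, and the Poisson-kernel construction at $p=\infty$. The only difference is that the paper takes $X := \Wdot_\Delta^{1,3p/2}(\Rthreeplus)$, the harmonic subspace, rather than the full homogeneous Sobolev space you use, so harmonicity of $u$ is built in rather than verified afterward by testing against $v\in\mathcal{D}(\Rthreeplus)$; both routes are valid.
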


\begin{proof}
Let us begin with the case $p=\infty$.  Applying the Poisson kernel $\mathcal{P}(z,x)$ to $g(x)$ gives a harmonic function in $\Rthreeplus$.  Considering the vector field 
$$ v(z,x) = -\left( \mathcal{P}(z,\cdot)\ast g(\cdot)(x), \riesz_1\mathcal{P}(z,\cdot)\ast g(\cdot)(x) , \riesz_2\mathcal{P}(z,\cdot)\ast g(\cdot)(x) \right) ,$$
it is clear that $v$ is curl free and is thus the gradient of a harmonic function $u$ with $\partial_\nu u = g$.  The bound follows from noting that the Riesz transforms are bounded from $L^\infty(\mathbb{R}^2)$ to $BMO(\mathbb{R}^2)$ and $\|\mathcal{P}(z,\cdot)\ast g(\cdot)(x)\|_{L^\infty(\mathbb{R}^2)} \leq \|g(x)\|_{L^\infty(\mathbb{R}^2)}$ for all $z$.

We use again the Lax-Milgram theorem for $p<\infty$.  Define $X:= \Wdot_{\Delta}^{1,\frac{3p}{2}}(\Rthreeplus)$ and $Y:= \Wdot^{1, \frac{3p}{3p-2}}(\Rthreeplus)$.  Let $B:X\times Y \rightarrow \mathbb{R}$ be defined by 
$$ B(u,v) = \int_{\Rthreeplus} \nabla u \cdot \nabla v $$
and $F:Y \rightarrow \mathbb{R}$ be defined by 
$$ F(v) = \int_{\mathbb{R}^2} v|_{z=0} g. $$
By \cref{trace}, we have that $v|_{z=0} \in L^\frac{p}{p-1}(\mathbb{R}^2)$, and therefore $F$ is well-defined and continuous.  Continuity of $B$ follows from H\"{o}lder's inequality.  As before, we are tasked with showing the coercivity and non-degeneracy of $B$.  Making use of the $\Pgrad$ operator, the details follow as in the previous lemma and are omitted.  The existence of $u$ and the gradient bound in terms of $g$ are provided by the Lax-Milgram theorem.  Taking $v$ compactly supported in $\Rthreeplus$ shows that indeed $\Delta u = 0$.  We then again have that $\partial_\nu u$ is well-defined as a distribution from integration by parts and satisfies $\partial_\nu u = g$.
\end{proof}

The following lemma regarding the strong convergence of solutions to the Laplace equation with Neumann boundary data shall be useful in the proof of \cref{main}. Of particular importance is the fact that the convergence holds up to the boundary $z=0$ when $p>\frac{4}{3}$, providing a stronger result than interior regularity estimates.

\begin{lemma}\label{strongconvergence}
Let $\{g_\epsilon\}_{\epsilon>0}$ be a bounded sequence of functions in $L^p(\mathbb{R}^2)$ for $p>\frac{4}{3}$.  Let $u_\epsilon(z,x):\Rthreeplus\rightarrow\mathbb{R}$ be the solution to
\[
 \left\{
       \begin{array}{@{}l@{\thinspace}l}
       \Delta u_\epsilon= 0 \hspace{.395in} z>0 \\
       \partial_\nu u_\epsilon = g_\epsilon \qquad z=0
       \end{array}  \right.
       \]
Then there exists $u$ such that up to a subsequence, $\nabla u_\epsilon$ converges strongly to $\nabla u$ in $L^2\left((0,R)\times B_R(0)\right)$ for all $R>0$.   
\end{lemma}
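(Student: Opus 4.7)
The plan is to combine uniform estimates from Lemma~\ref{laxmilgram2} with Banach--Alaoglu compactness to extract a weak limit, interior elliptic regularity for harmonic functions to obtain strong convergence away from the boundary, and H\"{o}lder's inequality to handle the boundary layer, with the condition $p>\frac{4}{3}$ playing a decisive role.

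First I would invoke Lemma~\ref{laxmilgram2} to obtain a uniform bound on $\nabla u_\epsilon$ in $L^{\frac{3p}{2}}(\Rthreeplus)$. Passing to a subsequence, extract a weak (or weak-$*$) limit $g$ of $g_\epsilon$ in $L^p(\mathbb{R}^2)$, and let $u$ denote the Neumann solution with data $g$ given by Lemma~\ref{laxmilgram2}. A weak subsequential limit $V$ of $\nabla u_\epsilon$ in $L^{\frac{3p}{2}}$ is curl-free as a distribution, so $V=\nabla \tilde u$ for some $\tilde u$, and $\tilde u$ inherits the harmonicity and the Neumann data $g$ in the distributional sense. Uniqueness in Lemma~\ref{laxmilgram2} then forces $\tilde u = u$, so $\nabla u_\epsilon \rightharpoonup \nabla u$ weakly in $L^{\frac{3p}{2}}(\Rthreeplus)$ along the extracted subsequence.

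Next, to upgrade weak convergence to strong $L^2$ convergence on $(0,R)\times B_R(0)$, I would split the cylinder into a boundary layer $\Omega^b_\delta := (0,\delta)\times B_R(0)$ and an interior region $\Omega^i_\delta := (\delta, R)\times B_R(0)$ for some $\delta > 0$. On $\Omega^i_\delta$, each component of $\nabla(u_\epsilon - u)$ is harmonic in $\Rthreeplus$, so Cauchy-type interior estimates (obtained from the mean value property) bound the $C^1$ norm on $B_{\delta/4}((z,x))$ by the $L^{\frac{3p}{2}}$ norm on $B_{\delta/2}((z,x))$, uniformly in $\epsilon$, for any $(z,x)\in \overline{\Omega^i_\delta}$. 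Covering the compact set $\overline{\Omega^i_\delta}$ by finitely many such balls produces a uniform $C^1$ bound; Arzel\`{a}--Ascoli then yields compactness in $C^0(\overline{\Omega^i_\delta})$, and the weak limit already identified forces the uniform limit to be $\nabla u$. Hence $\nabla u_\epsilon \to \nabla u$ uniformly on $\overline{\Omega^i_\delta}$, and in particular strongly in $L^2(\Omega^i_\delta)$.

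For the boundary layer, H\"{o}lder's inequality combined with the uniform bound gives
\begin{equation*}
\|\nabla u_\epsilon - \nabla u\|_{L^2(\Omega^b_\delta)} \leq C\,\|\nabla u_\epsilon - \nabla u\|_{L^{\frac{3p}{2}}(\Rthreeplus)}\,|\Omega^b_\delta|^{\frac{1}{2} - \frac{2}{3p}} \leq C(R)\,\delta^{\frac{1}{2} - \frac{2}{3p}}.
\end{equation*}
The assumption $p > \frac{4}{3}$ ensures $\frac{1}{2} - \frac{2}{3p} > 0$, so the boundary-layer contribution tends to zero with $\delta$, uniformly in $\epsilon$. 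Given $\eta > 0$, first choose $\delta$ so small that the boundary-layer contribution is below $\eta/2$, then take $\epsilon$ small enough so that the interior contribution is below $\eta/2$. The main obstacle is precisely this control up to the boundary $z=0$, where interior elliptic regularity says nothing; it is the strict inequality $p > \frac{4}{3}$ (equivalently $\frac{3p}{2} > 2$) that produces the positive H\"{o}lder exponent needed to absorb the boundary layer and push the convergence all the way to $z=0$.
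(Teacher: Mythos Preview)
Your proposal is correct and follows essentially the same strategy as the paper: extract a weak limit via Lemma~\ref{laxmilgram2}, split the cylinder $(0,R)\times B_R(0)$ into an interior region $(\delta,R)\times B_R(0)$ and a boundary layer $(0,\delta)\times B_R(0)$, obtain strong convergence on the interior from harmonicity, and control the boundary layer by H\"{o}lder's inequality using $\frac{3p}{2}>2$. The only cosmetic difference is that for the interior step the paper invokes the Poisson-kernel representation together with Rellich--Kondrachov, whereas you use mean-value interior estimates and Arzel\`{a}--Ascoli; both exploit the same interior regularity of harmonic functions, and the paper additionally notes the diagonalization over $R\in\mathbb{N}$ to produce a single subsequence working for all $R$.
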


\begin{proof}
Fix $R>0$. We first extract a subsequence which we shall continue to call $\{g_\epsilon\}$ in an abuse of notation that converges weakly-* to $g$ in $L^p(\mathbb{R}^2)$. Applying \cref{laxmilgram2} to $g_\epsilon$ gives that $ u_\epsilon$ converges weakly-* to $ u$ in $\Wdot^{1,\frac{3p}{2}}(\Rthreeplus)$, where $u$ solves the Laplace equation with Neumann data $g$.  Because $p>\frac{4}{3}$, we have that $\frac{3p}{2}>2$, and therefore $\{\nabla u_\epsilon\}$ is a weakly-* convergent sequence in $L^2\left((0,R)\times B_R(0)\right)$.  Note that $\nabla u_\epsilon$ is harmonic for all $\epsilon$ and thus the harmonic extension satisfies for fixed $z$ that
$$ \nabla u _\epsilon (z,x) = -\left( \mathcal{P}(z,\cdot)\ast g_\epsilon(\cdot)(x), \riesz_1\mathcal{P}(z,\cdot)\ast g_\epsilon(\cdot)(x) , \riesz_2\mathcal{P}(z,\cdot)\ast g_\epsilon(\cdot)(x) \right), $$
and similarly for $u$. Furthermore, by the smoothness and decay at infinity of the Poisson kernel $\mathcal{P}$ away from the boundary $z=0$, $\mathcal{P}\ast g_\epsilon$ and $\mathcal{P} \ast g$ belong to $W^{k,\frac{3p}{2}}\left((\delta,\infty)\times\mathbb{R}^2\right)$ for any $k\in\mathbb{N}$ and fixed $\delta>0$. Taking the Riesz transform shows that the same holds for $\riesz (\mathcal{P} \ast g_\epsilon)$ and $\riesz(\mathcal{P} \ast g)$. Then by the Rellich-Kondrachov theorem, $\nabla u_\epsilon$ converges strongly to $\nabla u$ up to a subsequence in $L^2((\delta,R)\times B_R(0))$. Thus fixing $0<\delta<R$, we can write
\begin{align*}
\limsup_{\epsilon\rightarrow 0} \int_0^R \int_{B_R(0)} &|\nabla u_\epsilon (z,x) - \nabla u(z,x)|^2 \,dx\,dz\\ 
&= \limsup_{\epsilon\rightarrow 0} \bigg{(}  \int_0^\delta \int_{B_R(0)} |\nabla u_\epsilon (z,x) - \nabla u(z,x)|^2 \,dx\,dz\\
&\qquad\qquad + \int_\delta^R \int_{B_R(0)} |\nabla u_\epsilon (z,x) - \nabla u(z,x)|^2 \,dx\,dz \bigg{)}\\
&\leq \sup_{\epsilon>0} \int_0^\delta \int_{B_R(0)} |\nabla u_\epsilon (z,x) - \nabla u(z,x)|^2 \,dx\,dz \\
&\leq \sup_{\epsilon>0} \|\nabla u_\epsilon - \nabla u\|^2_{L^\frac{3p}{2}(\Rthreeplus)} \| \mathcal{X}_{\{[0,\delta]\times B_{R}(0) \}} \|_{L^\frac{3p}{3p-4}(\Rthreeplus)}\\
&\leq C \left( R^2 \delta \right)^\frac{3p-4}{3p}
\end{align*}
after applying the uniform bound on $g_\epsilon$ in $L^p(\mathbb{R}^2)$ and H\"{o}lder's inequality.  Considering that $p>\frac{4}{3}$ and $R$ is fixed, the final expression approaches zero as $\delta$ decreases to zero.  Diagonalizing the subsequence $u_\epsilon$ over $R\in\mathbb{N}$ finishes the proof.   
\end{proof}

We close this section by recalling the Hodge decomposition from Vasseur and Puel \cite{pv}, with an additional higher regularity bound which will be useful.

\begin{lemma}[\textbf{Hodge Decomposition}]\label{hodge}
Let $v \in H^3(\Rthreeplus)$. Then there exists a unique decomposition
$$ v = \nabla w + \curl u, \qquad \nabla w, \curl u \in L^2(\Rthreeplus) $$  satisfying 
$$ \int_{\Rthreeplus} \nabla \phi \cdot v = \int_{\Rthreeplus} \nabla \phi \cdot \nabla w$$
for any $\nabla \phi \in L^2(\Rthreeplus)$. In addition, we have the following higher regularity bound:
$$\| \nabla w \|_{H^3(\Rthreeplus)} \lesssim \| v \|_{H^3(\Rthreeplus)}.$$
Finally, if the support of $v$ is compact, then $\nabla^2 w \in L^{1+\delta}(\Rthreeplus)$ for any $\delta>0$.
\end{lemma}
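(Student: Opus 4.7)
The plan is to construct $\nabla w$ as the orthogonal projection of $v$ in $L^2(\Rthreeplus;\mathbb{R}^3)$ onto the closed subspace
\[
G := \overline{\{ \nabla\phi : \phi \in C_c^\infty(\overline{\Rthreeplus}) \}} \subset L^2(\Rthreeplus;\mathbb{R}^3).
\]
Existence of $\nabla w$ and the orthogonality $\int_{\Rthreeplus} \nabla\phi \cdot (v-\nabla w) = 0$ for every $\nabla\phi \in L^2$ follow at once from the projection theorem in Hilbert space, while uniqueness of $\nabla w$ as an element of $L^2$ is simply the uniqueness of orthogonal projections. The orthogonality is exactly the weak formulation of the Neumann problem $\Delta w = \dive v$ in $\Rthreeplus$ with $\partial_\nu w = v\cdot\nu$ on $\{z=0\}$, so this construction reproduces the usual Hodge projector. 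Setting $g := v - \nabla w \in L^2(\Rthreeplus;\mathbb{R}^3)$ and testing against $\phi \in C_c^\infty(\Rthreeplus)$ yields $\dive g = 0$ distributionally inside $\Rthreeplus$; allowing $\phi$ to carry a nontrivial trace on $\{z=0\}$ then gives $g\cdot\nu = 0$ in the standard $H(\dive)$-trace sense.

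To represent $g = \curl u$, I would extend $g$ to $\tilde g$ on all of $\mathbb{R}^3$ by reflecting the tangential components $g_1,g_2$ evenly and the normal component $g_3$ oddly across $\{z=0\}$. The vanishing normal trace $g_3|_{z=0}=0$ is precisely what ensures that $\tilde g_3$ has no jump across $\{z=0\}$, so that no surface Dirac mass appears and $\dive\tilde g = 0$ holds distributionally on all of $\mathbb{R}^3$. Since $\tilde g \in L^2(\mathbb{R}^3)$ is divergence-free, standard Hodge theory on $\mathbb{R}^3$ via Fourier multipliers produces $u := -\Delta^{-1}\curl\tilde g$ satisfying $\curl u = \tilde g$ (since $\curl\curl = \nabla\dive - \Delta$ collapses on divergence-free fields); restricting to $\Rthreeplus$ yields $v = \nabla w + \curl u$ with both summands in $L^2(\Rthreeplus)$.

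For the higher-regularity estimate, $v \in H^3(\Rthreeplus)$ gives $\dive v \in H^2(\Rthreeplus)$ and, by the trace theorem, $v\cdot\nu|_{z=0} \in H^{5/2}(\mathbb{R}^2)$. Standard $H^s$-elliptic regularity for the half-space Neumann problem (for instance by even reflection across $\{z=0\}$ to reduce to an elliptic problem on $\mathbb{R}^3$ solved by Fourier multipliers, which are bounded on every $L^2$-based Sobolev space) then yields $\nabla w \in H^3(\Rthreeplus)$ together with the bound $\|\nabla w\|_{H^3} \lesssim \|\dive v\|_{H^2} + \|v\cdot\nu|_{z=0}\|_{H^{5/2}} \lesssim \|v\|_{H^3}$. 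For the compact-support statement, if $v$ is supported in a bounded subset of $\overline{\Rthreeplus}$ then $\dive v$ and $v\cdot\nu|_{z=0}$ are compactly supported $L^2$ functions, hence belong to $L^{1+\delta}$ (and their boundary trace to the corresponding fractional Sobolev space) for every $\delta>0$. The $L^p$-Calder\'{o}n-Zygmund estimate for the Neumann problem on $\Rthreeplus$, valid throughout $p\in(1,\infty)$ and therefore at $p=1+\delta$, then yields $\nabla^2 w \in L^{1+\delta}(\Rthreeplus)$.

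The main delicate point is the reflection step in the curl representation: the identity $\dive\tilde g = 0$ on $\mathbb{R}^3$ depends crucially on both the vanishing of $g_3|_{z=0}$ and the correct choice of even-versus-odd parities for the three components, and a mismatched reflection would create a surface Dirac mass on $\{z=0\}$ that destroys the Fourier-multiplier construction of $u$. Once this compatibility is checked, the remaining ingredients—Hilbert projection, half-space Sobolev regularity, and the $L^p$-theory of Calder\'{o}n-Zygmund operators away from the endpoints $p=1,\infty$—are entirely standard and account for why the final conclusion gives $\nabla^2 w \in L^{1+\delta}$ rather than $L^1$.
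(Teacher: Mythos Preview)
Your proposal is correct, but it proceeds quite differently from the paper's argument in the two quantitative parts.

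For the $H^3$ bound, the paper does not invoke black-box half-space elliptic regularity. Instead it runs a direct Nirenberg difference-quotient argument: applying tangential difference quotients $T_h$ and using the defining orthogonality $\int \nabla\phi\cdot v = \int \nabla\phi\cdot\nabla w$ with $\phi = T_{-h}T_h w$ (and iterates) gives control of all tangential derivatives of $\nabla w$, while the normal derivatives are recovered algebraically from $\partial_{zz}w = \Delta w - \overline{\Delta}w = \nabla\cdot v - \overline{\Delta}w$. This is more elementary and keeps the proof self-contained.

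For the $L^{1+\delta}$ claim, the paper again avoids the $L^p$ Calder\'on--Zygmund theory for the Neumann problem. It first uses the paper's own \cref{laxmilgram1} and \cref{laxmilgram2} (applied to the compactly supported data $\Delta w = \nabla\cdot v$ and $\partial_\nu w = v\cdot\nu$) to place $\nabla w$ in $L^{3/2+\delta}(\Rthreeplus)$, hence $w\in L^{3+\delta}(\Rthreeplus)$ by Sobolev. Then, since $w$ is harmonic outside the support of $v$, interior gradient estimates on balls $B(\alpha,|\alpha|/2)$ give the pointwise decay $|\nabla^2 w(\alpha)|\lesssim |\alpha|^{-(3-\delta)}$ at infinity, which yields $\nabla^2 w\in L^{1+\delta}$. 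Your route via $L^p$ Neumann estimates is shorter if one is willing to import that machinery, but the paper's approach has the advantage of relying only on tools already developed in the preliminaries together with classical mean-value estimates for harmonic functions.

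For the existence and the curl representation, the paper simply cites Puel--Vasseur; your explicit projection-plus-reflection construction is a fine way to supply what that citation contains.
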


\begin{proof}
Proposition 3.2 from Vasseur and Puel's work \cite{pv} shows the existence of the unique decomposition
$$ v = \nabla w + \curl u $$
given $v\in L^2(\Rthreeplus)$ with the desired orthogonality condition in $L^2(\Rthreeplus)$.  Since $v\in H^3(\Rthreeplus)$, in particular $v\in L^2(\Rthreeplus)$, and we can apply their result to conclude the existence of a unique $\nabla w$, $\curl u$ belonging to $L^2(\Rthreeplus)$ and satisfying
$$ \int_{\Rthreeplus} \nabla \phi \cdot v = \int_{\Rthreeplus} \nabla \phi \cdot \nabla w. $$
We now show the higher regularity bound $ \|\nabla w\|_{H^3} \lesssim \| v \|_{H^3(\Rthreeplus)}. $ The proof utilizes the classical Nirenberg difference quotients.

Let the difference quotient operator $T_h$ for the chosen direction $x = (x_1, x_2) \in \mathbb{R}^2$ be defined by 
$$ T_h(f)(z',x') := \frac{f(z',x'+hx) - f(z',x')}{h} $$
and let $\partial_x$ be the corresponding partial differential operator. Then all quantities in the following expression are well-defined and we can write
\begin{align*}
\int_{\Rthreeplus} \nabla (T_h w) \cdot \nabla (T_h w) &= -\int_{\Rthreeplus} \nabla (T_{-h} T_h  w) \cdot \nabla w \\
&= -\int_{\Rthreeplus} \nabla (T_{-h} T_h w) \cdot v\\
&= \int_{\Rthreeplus} \nabla (T_h w) \cdot (T_h v).
\end{align*}
Applying Cauchy's inequality, we conclude
$$ \| \nabla (T_h w) \|_{L^2(\Rthreeplus)} \lesssim \| v \|_{H^1(\Rthreeplus)}$$
with a bound uniform in $h$.  Passing to a limit as $h\rightarrow 0$ shows that then $\nabla (\partial_x w) \in L^2(\Rthreeplus)$; to show that $\partial_{zz} w \in L^2(\Rthreeplus)$, we observe that $\partial_{zz}w = \Delta w - \lap w = \nabla \cdot v - \lap w$.  Therefore 
$$ \| \nabla w \|_{H^1} \lesssim \| v \|_{H^1}. $$

For the $H^2$ bound, we can first write that
\begin{align*}
\int_{\Rthreeplus} \nabla (T_h T_h w) \cdot \nabla (T_h T_h w) &= \int_{\Rthreeplus} \nabla (T_{-h} T_{-h} T_h T_h  w) \cdot \nabla w \\
&= \int_{\Rthreeplus} \nabla (T_{-h} T_{-h} T_{h} T_h w) \cdot v\\
&= \int_{\Rthreeplus} \nabla (T_h T_h w) \cdot (T_h T_h v).
\end{align*}
From here we conclude as before that $ \|  \nabla (\partial_{xx} w) \|_{L^2} \lesssim \| v \|_{H^2} $. Since $\partial_{zzx} = \partial_x (\Delta-\lap)$ and $\Delta w = \nabla \cdot v$, we have that $\partial_{zzx} w \in L^2$.  In addition, since $\partial_{zzz} = \partial_z(\Delta - \lap)$, we have that $\partial_{zzz}w \in L^2$, and therefore
$$ \| \nabla w \|_{H^2} \lesssim \| v \|_{H^2}. $$
For the $H^3$ bound, we can argue as above to conclude that $\| \nabla (\partial_{xxx} w) \| \lesssim \| v \|_{H^3}$. The full bound then follows from the identities
$$\partial_{zzxx} = \partial_{xx}(\Delta - \lap), \qquad \partial_{zzzx} = \partial_{zx}(\Delta - \lap), \qquad \partial_{zzzz} = \partial_{zz}(\Delta - \lap) .$$

It remains to show the $L^{1+\delta}$ bound on $\nabla^2 w$ in the case that $\supp v$ is compact. When $\supp v$ is compact, $\Delta w = \nabla\cdot v$ and $\partial_\nu w = v\cdot\nu$ are therefore both compactly supported and $L^1$. By the elliptic bounds in \cref{laxmilgram1} and \cref{laxmilgram2}, $\nabla w \in L^{\frac{3}{2}+\delta}(\Rthreeplus)$ for any $\delta>0$.  By Sobolev embedding, $w \in L^{3+\delta}(\Rthreeplus)$ for any $\delta>0$. Classical estimates for harmonic functions then give that for $\alpha \in \Rthreeplus$ sufficiently large (far outside the support of $v$), 
\begin{align*}
| \nabla^2 w(\alpha) | &\lesssim \frac{1}{|\alpha|^5} \| w \|_{L^1(B(\alpha, \frac{|\alpha|}{2}))}\\
&\lesssim \frac{1}{|\alpha|^5} \|w\|_{L^{3+\delta}(\Rthreeplus)} |\alpha|^{3 \left(\frac{2+\delta}{3+\delta}\right)}.
\end{align*}
Thus $\nabla^2 w$ decays at a rate of $\frac{1}{|\alpha|^{3-\delta}}$ in $\Rthreeplus$ for any $\delta>0$, showing that $\nabla^2 w \in L^{1+\delta}(\Rthreeplus)$ for any $\delta>0$.
\end{proof}

\section{Proof of Theorem 1.1}

We now have the estimates necessary for the proof of the main theorem.  Here we assume that $p$ and $q$ satisfy the assumptions of \cref{main}.

\begin{proof}[Proof of \cref{main}]
Let $\{\gamma_\epsilon\}_{\epsilon>0}$ be a sequence of approximate identities compactly supported in $B_\epsilon(0)$ in $\mathbb{R}^2$ and $\{\Gamma_\epsilon\}_{\epsilon>0}$ a sequence of approximate identities compactly supported in $B_\epsilon(0)$ in $\mathbb{R}^3$.  We define truncated versions of the initial data and forcing by
$$ \omega_{T_\epsilon} =\omega \mathcal{X}_{\{|\omega|<\frac{1}{\epsilon}, |(z,x)| < \frac{1}{\epsilon}\}} , \hspace{.2in} \theta_{T_\epsilon} =\theta \mathcal{X}_{\{|\theta|<\frac{1}{\epsilon}, |x| < \frac{1}{\epsilon}\}}  ,$$
with $f_{L, T_\epsilon}(t)$ and $f_{\nu, T_\epsilon}(t)$ defined analogously for each time $t\geq 0$.  Then we regularize by putting
$$ \omega_\epsilon = \Gamma_\epsilon \ast \omega_{T_\epsilon} , \hspace{.2in} \theta_\epsilon =  \gamma_\epsilon\ast\theta_{T_\epsilon}, \hspace{.2in} f_{L,\epsilon}(t) = \Gamma_\epsilon \ast f_{L,T_\epsilon}(t) , \hspace{.2in} f_{\nu,\epsilon}(t) = \gamma_\epsilon \ast f_{\nu, T_\epsilon}(t),$$
ensuring that $\omega_\epsilon$, $\theta_\epsilon$, $f_{L,\epsilon}(t)$, and $f_{\nu, \epsilon}(t)$ are compactly supported, $C^\infty$ functions in space for each $t\geq 0$. Setting $\Delta\Psi_{0,\epsilon}:=\omega_\epsilon$ and $\dnu_{0,\epsilon}:=\theta_\epsilon$ shows that the assumptions of \cref{regularizedsolutions} are satisfied. Therefore there exists a classical solution $\nabla\Psi_\epsilon$ to
\[
\left\{
       \begin{array}{@{}l@{\thinspace}l}
       \partial_t(\Delta \Psi_\epsilon) + \grad^\perp \Psi_\epsilon \cdot \grad (\Delta \Psi_\epsilon) = f_{L,\epsilon}  \hspace{1.65in}  t>0,\hspace{.1in} z>0,\hspace{.1in} x=(x_1,x_2)\in\mathbb{R}^2 \hspace{.3in} \\
       \partial_t(\dnu_\epsilon)+ \grad^\perp \Psi_\epsilon \cdot \grad(\dnu_\epsilon)= f_{\nu,\epsilon}-\epsilon\half(\dnu_\epsilon) \hspace{.45in}  t>0,\hspace{.1in} z=0,\hspace{.1in} x=(x_1,x_2)\in\mathbb{R}^2 \hspace{.3in}  \\
       \end{array}  \right
.\]
Define $F_\epsilon:[0,\infty)\times\Rthreeplus\rightarrow\mathbb{R}$ for all time by
\[
 \left\{
       \begin{array}{@{}l@{\thinspace}l}
       \Delta F_\epsilon= f_{L,\epsilon} \hspace{1.505in} z>0 \\
       \partial_\nu F_\epsilon = f_{\nu,\epsilon}-\epsilon \half\dnu_\epsilon \qquad z=0
       \end{array}  \right.
       \]
Integrating by parts with a smooth test function $\phi(t,z,x)$ with compact spacial support in $\overline{\Rthreeplus}$, we have the following equalities:
\begin{align*}
-\int_0^T \int_0^\infty \int_{\mathbb{R}^2} &\left(\left( \partial_t \nabla \phi + \grad^\perp\Psi_\epsilon : \grad \nabla \phi \right)\cdot\nabla\Psi_\epsilon + \nabla\phi\cdot\nabla F_\epsilon \right) \,dx\,dz\,dt  \nonumber\\
& \qquad =\int_0^T \int_0^\infty \int_{\mathbb{R}^2} \left(\left( \partial_t \phi + \grad^\perp \Psi_\epsilon \cdot \grad \phi \right)\Delta\Psi_\epsilon  + \phi \Delta F_\epsilon \right)  \,dx\,dz\,dt \nonumber\\
&\qquad\qquad - \int_0^T \int_{\mathbb{R}^2} \left(\left( \partial_t \phi + \grad^\perp \Psi_\epsilon \cdot \grad \phi \right) \dnu_\epsilon + \phi \partial_\nu F_\epsilon \right) \,dx\,dt
\end{align*}
and
\begin{align*}
\int_0^\infty \int_{\mathbb{R}^2} \nabla \phi(0,z,x) \cdot \nabla\Psi_\epsilon(0,z,x) \,dx\,dz &= -\int_0^\infty \int_{\mathbb{R}^2} \phi(0,z,x) \Delta \Psi_\epsilon(0,z,x) \,dx\,dz   \nonumber\\
& \qquad+  \int_{\mathbb{R}^2} \phi(0,0,x)\dnu_\epsilon(0,0,x) \,dx
\end{align*}
Using that $\nabla\Psi_\epsilon$ is a solution to the regularized system, the right hand sides of the above equalities are in fact equal, and therefore the left hand sides are equal as well, i.e.
\begin{align}\label{weakrqgsmoothdata}
- \int_0^T \int_0^\infty &\int_{\mathbb{R}^2} \left(\left( \partial_t \nabla \phi+ \grad^\perp\Psi_\epsilon:\grad\nabla\phi \right) \cdot \nabla \Psi_\epsilon  + \nabla \phi \cdot \nabla F_\epsilon \right )\,dx\,dz\,dt \nonumber \\
 &= \int_0^\infty \int_{\mathbb{R}^2} \nabla\phi(0,z,x)\cdot\nabla\Psi_\epsilon(0,z,x)\,dx\,dz
\end{align}
If the support of $\phi$ is not compact but $\Delta \phi$ and $\partial_\nu \phi$ are compactly supported, we claim the equality \eqref{weakrqgsmoothdata} still holds under approximation by smooth functions. By \cref{hodge}, $\nabla^2 \phi \in L^{1+\epsilon}\cap L^\infty(\Rthreeplus)$, ensuring that 
$$ \grad^\perp\Psi_\epsilon : \grad \nabla \phi \cdot \nabla \Psi_\epsilon $$
is bounded using H\"{o}lder's inequality and we can pass to the limit from a sequence of compactly supported functions. In addition, $\nabla \phi \in L^2\cap L^\infty(\Rthreeplus)$, ensuring that $\nabla \phi \cdot \nabla F_\epsilon$ is well-defined by the assumptions on the integrability of $f_\nu$ and $f_L$. 

To pass to the limit in \eqref{weakrqgsmoothdata}, we use \cref{regularizedsolutions} to detail the spaces in which $\{\Psi_\epsilon\}$ is pre-compact.  Throughout, $T>0$ is fixed, and weak-$\ast$ convergence is abbreviated simply as weak convergence. We decompose $\Psi_\epsilon(t) = \Psi_{\epsilon,1}(t) + \Psi_{\epsilon,2}(t)$ as follows:  
\[
 \left\{
       \begin{array}{@{}l@{\thinspace}l}
       \Delta \Psi_{\epsilon,1} = 0 \\
       \dnu_{\epsilon,1} = \dnu_\epsilon 
       \end{array}  \right.
\hspace{.3in} 
 \left\{
       \begin{array}{@{}l@{\thinspace}l}
       \Delta \Psi_{\epsilon,2} = \Delta \Psi_\epsilon \\
       \dnu_{\epsilon,2} = 0.
       \end{array}  \right.
\]

\begin{enumerate}
\item\label{itm:one} By \cref{regularizedsolutions}(2), $\{\dnu_{\epsilon,1}\}$ is bounded in $L^\infty([0,T];L^p(\mathbb{R}^2))$ and we can pass to a weakly convergent subsequence.  
\item\label{itm:three} By \cref{regularizedsolutions}(1), $ \{ \Psi_{\epsilon,2} \} $ is bounded in $L^\infty([0,T];\Wdot^{2,q}(\mathbb{R}_+^3))$ and we can pass to a weakly convergent subsequence. 
\end{enumerate}

Given the weak convergence of $\dnu_\epsilon=\dnu_{\epsilon,1}$ and $\Delta \Psi_\epsilon=\Delta\Psi_{\epsilon,2}$, we will show that up to a subsequence, $\nabla\Psi_\epsilon=\nabla\Psi_{\epsilon,1}+\nabla\Psi_{\epsilon,2}$ converges strongly in $L^\infty([0,T];L^2((0,R)\times B_0(R)))$ for any $R$.  To prove this, we use the Aubin-Lions lemma \cite{a} (as do Puel and Vasseur \cite{pv}); note also that here is where require $p>\frac{4}{3}$ and $q>\frac{6}{5}$. We break the argument into steps. The first step specifies the Banach space in which $\{\nabla\Psi_{\epsilon}\}$ is bounded.  The second step specifies the Banach space in which $\{\partial_t \nabla \Psi_\epsilon\}$ is bounded.  The last step specifies the relationship between these Banach spaces and $L^\infty([0,T];L^2((0,R)\times{B_0(R)}))$, justifying the use of the Aubin-Lions lemma.

$Step$ $One:$ Let $\nabla h\in C_c^\infty(\overline{\Rthreeplus})$.  Define
$$ \|\nabla h\|_{B_1} := \|\Delta h\|_{L^q(\Rthreeplus)} + \|\partial_\nu h\|_{L^p(\mathbb{R}^2)}. $$
Define the space of gradients
$$ B_1 := \textnormal{cl}(C_c^\infty(\overline{\Rthreeplus})) $$
to be the closure of $C_c^\infty(\overline{\Rthreeplus})$ gradients of functions in the upper half space with respect to the norm $\|\cdot\|_{B_1}$. 
Approximating $\nabla\Psi_\epsilon(t)$ by gradients of smooth, compactly supported functions shows that $\nabla\Psi_\epsilon(t)\in B_1$ for each $t\in[0,T]$, and thus $\{\nabla\Psi_\epsilon\}\subset L^\infty([0,T];B_1)$ is a bounded sequence.

$Step$ $Two:$ The distributional time derivative $\partial_t \nabla \Psi_\epsilon$ (in the sense of the Aubin-Lions lemma) is defined by the equality 
\begin{align}\label{distributionaltimederivative}
\langle \partial_t \nabla \Psi_\epsilon, h \rangle := -\int_0^T \nabla\Psi_\epsilon(t) h'(t) \,dt
\end{align}
for all $h\in C_c^\infty(0,T)$. Define the Sobolev space $V$ to be the closure of $C_c^\infty\left([0,R)\times{B_0(R)}\right)$ vector fields under the usual $H^3(\Rthreeplus)$ norm, and set $B_{-1}$ to be the dual space $V^*$. To show that $\{\partial_t \nabla \Psi_\epsilon\}$ is a bounded sequence in $L^\infty([0,T];B_{-1})$, we test \eqref{distributionaltimederivative} against a vector field $v\in V$. By \cref{hodge}, we have that
$$ -\int_0^T \int_{\Rthreeplus} \nabla\Psi_\epsilon(t) h'(t) v(z,x) \,dt \,dz\,dx = -\int_0^T \int_{\Rthreeplus} \nabla\Psi_\epsilon(t) h'(t) \nabla w(z,x) \,dt \,dz\,dx$$
Using again \cref{hodge}, we have that $\nabla w\in H^3(\Rthreeplus)$ and $\nabla^2 w \in L^{1+\delta}(\Rthreeplus)$ for any $\delta>0$. The assumptions on the integrability of $f_\nu$ and $f_L$ in \cref{main} ensure that $\nabla F_\epsilon \in L_t^1(L^2(\Rthreeplus))$, and therefore $\nabla F_\epsilon \cdot \nabla w$ is well-defined and integrable independently of $\epsilon$. The assumptions on the integrability of $\theta$, $\omega$, $f_L$, and $f_\nu$ in \cref{main} and the estimates in \cref{regularizedsolutions} ensure that $\nabla\Psi_\epsilon(t)$ always belongs to $L^\frac{3p}{2}+L^\frac{3q}{3-q}(\Rthreeplus)$ for some $p<\infty$, $q<3$ uniformly in $t$ and $\epsilon$, and therefore
$$ \grad^\perp\Psi_\epsilon : \grad \nabla w \cdot \nabla \Psi_\epsilon$$
is well-defined and integrable uniformly in $\epsilon$ by H\"{o}lder's inequality. Thus all terms in equality \eqref{weakrqgsmoothdata} are well-defined and bounded uniformly in $\epsilon$ for the test function $\phi=h(t)w(z,x)$ . In conclusion, we have that $\{\partial_t \nabla\Psi_\epsilon\}$ is a bounded sequence in $L^\infty([0,T];B_{-1})$.

$Step$ $Three:$ The inclusion of $\left(L^2((0,R)\times{B_0(R)})\right)^3$ into $B_{-1}$ is continuous. We now show that the inclusion of $B_1$ into $\left(L^2((0,R)\times{B_0(R)})\right)^3$ is compact.  Given a bounded sequence $\{\nabla z_n\}\subset B_1$, decompose as follows:  
\[
 \left\{
       \begin{array}{@{}l@{\thinspace}l}
       \Delta z_{n,1} = 0 \\
       \partial_\nu z_{n,1} = \partial_\nu z_n 
       \end{array}  \right.
\hspace{.3in} 
 \left\{
       \begin{array}{@{}l@{\thinspace}l}
       \Delta z_{n,2} = \Delta z_n \\
       \partial_\nu z_{n,2} = 0.
       \end{array}  \right.
\]
Since $p>\frac{4}{3}$, we can apply \cref{strongconvergence} to $\{\nabla z_{n,1}\}$, yielding strong convergence of a subsequence in $\left(L^2((0,R)\times{B_0(R)})\right)^3$.  Using that $z_{n,2}\in \Wdot^{2,q}(\Rthreeplus)$ and $q>\frac{6}{5}$, the Rellich-Kondrachov theorem yields in addition strong convergence of a subsequence $\{\nabla z_{n,2}\}$ in $\left(L^2((0,R)\times{B_0(R)})\right)^3$. Summing $z_n = z_{n,1}+z_{n,2}$ gives that $\nabla z_n$ converges strongly in $\left(L^2((0,R)\times{B_0(R)})\right)^3$, and therefore $B_1$ embeds compactly in $\left(L^2((0,R)\times{B_0(R)})\right)^3$. Therefore the Aubin-Lions lemma can be applied, and up to a subsequence,
\begin{align}\label{L2convergence}
\nabla\Psi_\epsilon \rightarrow \nabla\Psi \quad \textnormal{in} \quad L^\infty\left([0,T];\left(L^2((0,R)\times{B_0(R)})\right)^3\right)
\end{align} 
We then diagonalize the subsequence to obtain strong convergence for any $R\in \mathbb{N}$.

Returning to the proof of \cref{main}, let $\nabla\Psi$ be the limit of $\nabla\Psi_\epsilon$ with convergence in the spaces specified in (\ref{itm:one})-(\ref{itm:three}) and \eqref{L2convergence}.  By (\ref{itm:one}) and integration by parts, 
\begin{align*}
\lim_{\epsilon\rightarrow 0} \int_0^T \int_0^\infty \int_{\mathbb{R}^2} \nabla \phi \cdot \nabla F_{\epsilon} \,dx\,dz\,dt &= \lim_{\epsilon\rightarrow 0} \bigg{(} -\int_0^T \int_0^\infty \int_{\mathbb{R}^2} \phi f_{L,\epsilon}\,dx\,dz\,dt\\
&\qquad\qquad + \int_0^T \int_{\mathbb{R}^2} \phi \left( f_{\nu,\epsilon}-\epsilon \half \dnu_\epsilon \right)\,dx\,dt \bigg{)} \\
&= \lim_{\epsilon\rightarrow 0} \left( -\int_0^T \int_0^\infty \int_{\mathbb{R}^2} \phi f_{L,\epsilon} + \int_0^T \int_{\mathbb{R}^2} \left( \phi  f_{\nu,\epsilon}-\epsilon \half \phi \dnu_{\epsilon,1} \right) \right) \\
&= -\int_0^T \int_0^\infty \int_{\mathbb{R}^2} \phi f_{L} + \int_0^T \int_{\mathbb{R}^2} \phi  f_{\nu} \\
&= \int_0^T \int_0^\infty \int_{\mathbb{R}^2} \nabla \phi \cdot \nabla F
\end{align*}
for $F$ solving the boundary value problem $\Delta F = f_L$ and $\partial_\nu F = f_\nu$.
Second, by \eqref{L2convergence},
$$ \lim_{\epsilon\rightarrow 0}\int_0^T \int_0^\infty \int_{\mathbb{R}^2} \left( \grad^\perp\Psi_\epsilon : \grad \nabla \phi \right) \cdot \nabla \Psi_\epsilon \,dx\,dz\,dt = \int_0^T \int_0^\infty \int_{\mathbb{R}^2} \left( \grad^\perp\Psi : \grad \nabla \phi \right) \cdot \nabla \Psi \,dx\,dz\,dt$$
In addition, it is immediate that
$$ \lim_{\epsilon\rightarrow 0} \int_0^\infty \int_{\mathbb{R}^2} \nabla\phi (0,z,x)\cdot \nabla\Psi_\epsilon(0,z,x)\,dx\,dz = \int_0^\infty \int_{\mathbb{R}^2} \nabla\phi(0,z,x)\cdot \nabla\Psi(0,z,x)\,dx\,dz$$
and
$$ \lim_{\epsilon\rightarrow 0} \int_0^T \int_0^\infty \int_{\mathbb{R}^2} \partial_t\nabla\phi\cdot \nabla\Psi_\epsilon\,dx\,dz\,dt = \int_0^T\int_0^\infty \int_{\mathbb{R}^2} \partial_t\nabla\phi\cdot \nabla\Psi\,dx\,dz\,dt. $$
Passing to the limit in \eqref{weakrqgsmoothdata}, we have that 
\begin{align*}
-\int_0^T \int_0^\infty &\int_{\mathbb{R}^2} \left(\left( \partial_t \nabla \phi+ \grad^\perp\Psi:\grad\nabla\phi \right) \cdot \nabla \Psi  + \nabla \phi \cdot \nabla F \right )\,dx\,dz\,dt \nonumber \\
 &= \int_0^\infty \int_{\mathbb{R}^2} \nabla\phi(0,z,x)\cdot\nabla\Psi(0,z,x)\,dx\,dz
\end{align*}
and thus $\Psi$ satisfies \cref{weakrqg}. The bound in the statement of the theorem follows from passing to the limit in $\epsilon$ in \cref{regularizedsolutions}(1)-(3), completing the proof. 
\end{proof}

\section{Proof of Theorem 1.2}
\begin{proof}[Proof of \cref{conservation}]
Define for all time
\begin{align*}
(\nabla \Psi ^\epsilon ) ^\epsilon(z,x) &:= \left(\nabla\Psi(z,\cdot) \ast \gamma_\epsilon\right) \ast \gamma_\epsilon (x)\\
&= \int_{\mathbb{R}^2} \int_{\mathbb{R}^2} \nabla \Psi(z,x-x'-\bar{x})\gamma_\epsilon(x')\gamma_\epsilon(\bar{x}) \,dx'\,d\bar{x}
\end{align*}
that is, we convolve $\nabla\Psi$ twice with a mollifier $\gamma_\epsilon$ in $x$ only, $z$ by $z$.  The extra mollification is for passage onto the nonlinear term later.  Strictly speaking, $(\nabla \Psi ^\epsilon ) ^\epsilon$ is not an admissible test function; it lacks compact support in space and time, and differentiability in $z$ and $t$.  However, let us proceed formally for the time being, and assume that $(\nabla\Psi^\epsilon)^\epsilon$ is admissible and that $\nabla\Psi$ is differentiable in time.  Multiplying $(rQG)$ by $(\nabla \Psi ^\epsilon ) ^\epsilon$ and integrating in space and from time $0$ to $t$, we obtain
\begin{align}\label{conservationid}
E_\epsilon(t) - E_\epsilon(0) &:=  \int_0^\infty \int_{\mathbb{R}^2}  \nabla \Psi ^\epsilon (t)  \cdot \nabla \Psi^\epsilon (t) \,dx \,dz -   \int_0^\infty \int_{\mathbb{R}^2}  \nabla \Psi ^\epsilon (0) \cdot \nabla \Psi^\epsilon (0) \,dx \,dz \nonumber\\
&= -2\int_0^t \int_0^\infty \int_{\mathbb{R}^2} \left( \grad^\perp\Psi : \grad (\nabla\Psi^\epsilon)^\epsilon \right) \cdot \nabla\Psi \,dx\,dz\,d\tau
\end{align}
We can now apply \cref{lpstuff}(\ref{dos}) to the right hand side to move the mollifier over, introduce the commutator between multiplication and mollification, and rewrite the nonlinear terms using tensor notation, obtaining
\begin{align*}
E_\epsilon(t) - E_\epsilon(0) &= -2 \int_0^t \int_0^\infty \int_{\mathbb{R}^2} \left\langle \left( \grad^\perp\Psi \otimes \nabla \Psi \right)^\epsilon - \left( \grad^\perp\Psi^\epsilon \otimes \nabla \Psi^\epsilon \right)  , \grad \nabla \Psi^\epsilon  \right\rangle \,dx \,dz \,d\tau\\
&\qquad- 2\int_0^t \int_0^\infty \int_{\mathbb{R}^2} \left\langle \grad^\perp\Psi^\epsilon \otimes \nabla \Psi^\epsilon, \grad \nabla \Psi^\epsilon  \right\rangle \,dx\,dz\,d\tau
\end{align*}
Integrating by parts in $x$ for fixed $z$ and $\tau$ gives that the second term is equal to zero. Applying \cref{lpstuff}(\ref{tres}) $z$ by $z$ with $f=\grad^\perp\Psi$ and $g=\nabla\Psi$ to the first term, we have
\begin{align*}
E_\epsilon(t) - E_\epsilon(0) &= -2 \int_0^t \int_0^\infty \iiint_{(\mathbb{R}^2)^3} \bigg{\langle} \left( \grad^\perp\Psi(x-\bar{x}) - \grad^\perp\Psi(x) \right)\otimes\\
&\left( \nabla\Psi(x-\bar{x}) - \nabla\Psi(x-x') \right), \grad\nabla\Psi(x) \bigg{\rangle}\cdot\gamma_\epsilon(\bar{x})\gamma_\epsilon(x') \,d\bar{x}\,dx'\,dx\,dz\,d\tau . \\  
\end{align*} 
Now apply H\"{o}lder's inequality in $x$ to obtain
\begin{align*}
|E_\epsilon(t) - E_\epsilon(0)| &\leq C \int_0^t\int_0^\infty \iint_{(\mathbb{R}^2)^2} \|\grad^\perp \Psi(z,\tau,\cdot-\bar{x})-\grad^\perp \Psi(z,\tau,\cdot) \|_{L^3(\mathbb{R}^2)} \gamma_\epsilon(\bar{x})\gamma_\epsilon(x')\\
&\quad \times \|\nabla \Psi(z,\tau,\cdot)-\nabla\Psi(z,\tau,\cdot-(x'-\bar{x})) \|_{L^3(\mathbb{R}^2)} \|\grad\nabla\Psi(z,\tau,\cdot) \|_{L^3(\mathbb{R}^2)}  \,d\bar{x}\,dx'\,dz\,d\tau.
\end{align*}
Integrating in $\bar{x}$ and $x'$, using the fact that $\gamma_\epsilon$ has integral one, and applying \cref{lpstuff}(\ref{cuatro}) $z$ by $z$ for $\bar{x},x'-\bar{x}< C\epsilon$ gives
\begin{align*}
|E_\epsilon(t) - E_\epsilon(0)|&\leq C \int_0^t\int_0^\infty \|\grad^\perp \Psi(z,\tau,\cdot) \|_{\Balpha(\mathbb{R}^2)} \|\nabla\Psi(z,\tau,\cdot) \|^2_{\Balpha(\mathbb{R}^2)} \epsilon^{3\alpha-1} \,dz\,d\tau \\
&\leq C \epsilon^{3\alpha -1} \|\nabla\Psi\|^3_{L^3 \left( [0,T)\times [0,\infty); \Balpha(\mathbb{R}^2) \right)}
\end{align*}
which approaches $0$ as $\epsilon\rightarrow 0$ if $\alpha>\frac{1}{3}$.  

We must now account for that fact that $(\nabla\Psi^\epsilon)^\epsilon$ is not an admissible test function.  Replacing $\Psi$ (which is a well-defined function in $C\left([0,T);L^6(\Rthreeplus)\right)$ by Sobolev embedding) with 
$$ \Psi_\eta := \left(\mathcal{X}_{\{|(x,z)| \leq \frac{1}{\eta}, \tau\leq T-\eta \}} \Psi \right) \ast \Gamma_\eta $$
for $\Gamma_\eta$ a space-time mollifier in $\mathbb{R}^4$ ensures compact support and differentiability in $z$ and $t$.   Then after mollifying as before in $x$, we can use $(\nabla (\Psi_\eta)^\epsilon)^\epsilon$ as a test function.  It is well known that \eqref{conservationid} holds when differentiability in time is replaced with $C\left([0,T);L^2(\Rthreeplus)\right)$. Passing to the limit in $\eta$ first and then in $\epsilon$ gives that
$$
\|\nabla\Psi(t)\|_{L^2(\Rthreeplus)}^2 -\|\nabla\Psi(0)\|_{L^2(\Rthreeplus)}^2 = \lim_{\epsilon\rightarrow 0}E_\epsilon(t)-E_\epsilon(0)  = 0,$$
completing the proof.
\end{proof}

\section{Proof of Theorem 1.3}
We divide up the proof into parts (1), (2) and (3).  
\begin{proof}[Proof of \cref{connectionstheorem}(1)]
The first step shows that integration by parts is valid for the reformulated equation, and the second step then integrates by parts to prove the claim.  

$Step$ $One:$ First, we extend the Sobolev function $\Psi$ to $\mathbb{R}^3$, denoting the extended function by $\Psi_E$. Let $\{ \Gamma_\epsilon \}_{\epsilon>0}$ be a sequence of approximate identities in $\mathbb{R}^3$.  Define 
$$ \nabla\Psi_{E,\epsilon} := \nabla\Psi_E \ast \Gamma_\epsilon$$
for $\epsilon>0$.  By assumption, we have 
$$ \Delta\Psi \in L^\infty\left([0,T);L^q(\Rthreeplus)\right) , \qquad \dnu \in L^\infty\left([0,T);L^p(\mathbb{R}^2)\right) $$
for $q\in[\frac{3}{2},3]$ and $p\in[2,\infty]$.  Combined with the elliptic estimates in \cref{laxmilgram1} and \cref{laxmilgram2}, this ensures that integration by parts for $\nabla\Psi_{E,\epsilon}$ is valid, and thus for $\phi$ compactly supported in $\Rthreeplus$ and time, 
\begin{align*}
-\int_0^T \int_0^\infty \int_{\mathbb{R}^2} &\left(\left( \partial_t \nabla \phi + \grad^\perp\Psi_{E,\epsilon} : \grad \nabla \phi \right)\cdot\nabla\Psi_{E,\epsilon} + \nabla\phi\cdot\nabla F \right) \,dx\,dz\,dt  \\
& \qquad =\int_0^T \int_0^\infty \int_{\mathbb{R}^2} \left(\left( \partial_t \phi + \grad^\perp \Psi_{E,\epsilon} \cdot \grad \phi \right) \Delta\Psi_{E,\epsilon}  + \phi \Delta F \right)  \,dx\,dz\,dt \nonumber\\
&\qquad\qquad - \int_0^T \int_{\mathbb{R}^2} \left(\left( \partial_t \phi + \grad^\perp \Psi_{E,\epsilon} \cdot \grad \phi \right) \dnu_{E,\epsilon} + \phi \partial_\nu F \right) \,dx\,dt\\
\end{align*}
and
\begin{align*}
\int_0^\infty \int_{\mathbb{R}^2} \nabla \phi(0,z,x) \cdot \nabla\Psi_\epsilon(0,z,x) \,dx\,dz &= -\int_0^\infty \int_{\mathbb{R}^2} \phi(0,z,x) \Delta \Psi_\epsilon(0,z,x) \,dx\,dz   \nonumber\\
& \qquad+  \int_{\mathbb{R}^2} \phi(0,0,x)\dnu_\epsilon(0,0,x) \,dx.
\end{align*}
We now argue that passing to the limit is justified in each identity.  We have that \cref{laxmilgram1}, \cref{laxmilgram2} give that $\nabla\Psi \in L^\frac{3q}{3-q}(\Rthreeplus) + L^\frac{3p}{2}(\Rthreeplus)$ for all time.  Noticing that since $q\geq \frac{3}{2}$ and $p\geq 2$, we have that
$$ \frac{3q}{3-q} \geq 3, \quad \frac{3p}{2} \geq 3, $$
and the following convergences follow:
$$ \Delta\Psi_{E,\epsilon} \rightarrow \Delta\Psi \quad \textnormal{in} \quad L^2\left([0,T);L_{loc}^\frac{3}{2}(\Rthreeplus)\right)$$
$$ \nabla\Psi_{E,\epsilon} \rightarrow \nabla\Psi \quad \textnormal{in} \quad L^2\left([0,T);L_{loc}^3(\Rthreeplus)\right).$$
Furthermore, using H\"{o}lder's inequality shows that for each fixed time, $\grad^\perp\Psi \Delta\Psi \in L^1_{loc}(\Rthreeplus)$ and $\grad^\perp\Psi \otimes \nabla\Psi \in L_{loc}^1(\Rthreeplus)$. Therefore,
$$ \grad^\perp\Psi_{E,\epsilon}\Delta\Psi_{E,\epsilon} \rightarrow \grad^\perp\Psi \Delta\Psi \quad \textnormal{in} \quad L^1\left( [0,T);L_{loc}^1(\Rthreeplus) \right) $$
$$ \grad^\perp\Psi_{E,\epsilon}\otimes\nabla\Psi_{E,\epsilon} \rightarrow \grad^\perp\Psi \otimes \nabla\Psi \quad \textnormal{in} \quad L^1\left([0,T);L_{loc}^1(\Rthreeplus)\right) $$
Finally, we have that
$$\frac{2q}{3-q} \geq 2,$$ 
and \cref{trace} gives $\nabla\Psi_2|_{z=0} \in L^\frac{2q}{3-q}(\mathbb{R}^2)$. Recalling that $\dnu\in L^2(\mathbb{R}^2)$ and $\grad^\perp\Psi_1 = -\riesz^\perp\dnu$, applying H\"{o}lder again gives $\grad^\perp\Psi \dnu \in L^1_{loc}(\mathbb{R}^2)$. It therefore follows that 
$$ \dnu_{E,\epsilon} \rightarrow \dnu \quad \textnormal{in} \quad L^2\left([0,T); L^2_{loc}(\mathbb{R}^2)\right) $$
$$ \grad^\perp \Psi_{E,\epsilon}|_{z=0} \rightarrow \grad^\perp \Psi|_{z=0}  \quad \textnormal{in} \quad L^2\left([0,T);L_{loc}^2(\mathbb{R}^2)  \right) $$
and
$$ \grad^\perp\Psi_{E,\epsilon} \partial_\nu \Psi_{E,\epsilon} \rightarrow \grad^\perp \Psi \dnu \quad \textnormal{in} \quad L^1\left([0,T); L_{loc}^1(\mathbb{R}^2)\right) $$
Letting $\epsilon$ tend to $0$ shows that 
\begin{align}\label{ibp1}
-\int_0^T \int_0^\infty \int_{\mathbb{R}^2} &\left(\left( \partial_t \nabla \phi + \grad^\perp\Psi : \grad \nabla \phi \right)\cdot\nabla\Psi + \nabla\phi\cdot\nabla F \right) \,dx\,dz\,dt \nonumber \\
& \qquad =\int_0^T \int_0^\infty \int_{\mathbb{R}^2} \left(\left( \partial_t \phi + \grad^\perp \Psi \cdot \grad \phi \right) \Delta\Psi  + \phi \Delta F \right)  \,dx\,dz\,dt \nonumber\\
&\qquad\qquad - \int_0^T \int_{\mathbb{R}^2} \left(\left( \partial_t \phi + \grad^\perp \Psi\cdot \grad \phi \right) \dnu + \phi \partial_\nu F \right) \,dx\,dt.
\end{align}
and
\begin{align}\label{ibp2}
\int_0^\infty \int_{\mathbb{R}^2} \nabla \phi(0,z,x) \cdot \nabla\Psi(0,z,x) \,dx\,dz &= -\int_0^\infty \int_{\mathbb{R}^2} \phi(0,z,x) \Delta \Psi(0,z,x) \,dx\,dz   \nonumber\\
& \qquad+  \int_{\mathbb{R}^2} \phi(0,0,x)\dnu(0,0,x) \,dx.
\end{align}

$Step$ $Two:$ Let us start by assuming that $\nabla\Psi$ satisfies \cref{weakrqg}. Then we have that 
\begin{align*}
-\int_0^T \int_0^\infty \int_{\mathbb{R}^2} &\left(\left( \partial_t \nabla \phi + \grad^\perp\Psi : \grad \nabla \phi \right)\cdot\nabla\Psi + \nabla\phi\cdot\nabla F \right) \,dx\,dz\,dt = \\
&\int_0^\infty \int_{\mathbb{R}^2} \nabla \phi(0,z,x) \cdot \nabla\Psi(0,z,x) \,dx\,dz,
\end{align*}
i.e. the left hand side of \eqref{ibp1} is equal to the left hand side of \eqref{ibp2}.  Choosing $\phi$ to be compactly supported in $[-T,T]\times \Rthreeplus$ gives that 
\begin{align*}
\int_0^T \int_0^\infty \int_{\mathbb{R}^2} &\left(\left( \partial_t \phi + \grad^\perp \Psi \cdot \grad \phi \right)\cdot \Delta\Psi  + \phi \Delta F \right)  \,dx\,dz\,dt   \\
&\qquad =-\int_0^\infty \int_{\mathbb{R}^2} \phi(0,z,x) \Delta \Psi(0,z,x) \,dx\,dz , 
\end{align*}
and therefore $\nabla\Psi$ satisfies \eqref{qg1}.  

To show that $\nabla\Psi$ satisfies \eqref{qg2}, choose $\bar{\phi}$ to be a test function compactly supported in $[-T,T] \times \mathbb{R}^2$. Let $\gamma(z)$ be a smooth function of one variable compactly supported in $[-1,1]$ with $\gamma\equiv 1$ for $x\in[-\frac{1}{2},\frac{1}{2}]$.  Let $\gamma_n (z) = \gamma(nz)$.  Define $\phi_{n}(t,z,x) = \gamma_n(z) \bar{\phi}(t,x)$.  Then $\grad\phi_{n}$, $\partial_t \phi_{n}$, and $\phi_{n}$ converge to $0$ in $\Rthreeplus$ (both pointwise and in any Lebesgue space). We have that the right hand side of \eqref{ibp1} is equal to the right hand side of \eqref{ibp2}.  Then plugging in $\phi_n$ as a test function, letting $n$ tend to infinity, and passing to the limit shows that
\begin{align}\label{boundaryequality}
 - \int_0^T \int_{\mathbb{R}^2} &\left(\left( \partial_t \bar{\phi} + \grad^\perp\Psi \cdot \grad \bar{\phi} \right) \dnu + \bar{\phi} \partial_\nu F \right) \,dx\,dt \nonumber \\
&\qquad = \int_{\mathbb{R}^2} \bar{\phi}(0,0,x) \dnu(0,x) \,dx.
\end{align}

Now assume for the other direction that $\Psi$ verifies \cref{weakqg}. Then for $\phi$ compactly supported in $\Rthreeplus$ (and time) and $\bar{\phi}$ compactly supported in $\mathbb{R}^2$ (and time),
\begin{align}\label{compactsupport}
-\int_0^T \int_0^\infty &\int_{\mathbb{R}^2} \left(\left( \partial_t \phi + \grad^\perp \Psi \cdot \grad \phi  \right) \Delta \Psi + \phi f_L  \right) \,dx\,dz\,dt \nonumber \\
&= \int_0^\infty \int_{\mathbb{R}^2} \phi(0,z,x)\Delta\Psi(0,z,x) \,dx\,dz
\end{align}
and
\begin{align*}
-\int_0^T &\int_{\mathbb{R}^2} \left(\left(  \partial_t \bar{\phi} + \grad^\perp \Psi(t,0,x) \cdot \grad \bar{\phi}\right) \dnu(t,x) + \bar{\phi}f_\nu  \right) \,dx \,dt \nonumber \\
&= \int_{\mathbb{R}^2} \bar{\phi}(0,x)\dnu(0,x)\,dx .
\end{align*}
Before proceeding we show that \eqref{compactsupport} holds for $\phi$ compactly supported in $\mathbb{R}^3$ rather than $\Rthreeplus$.  Let $\phi$ be compactly supported in $\mathbb{R}^3$ and time.  Using $\gamma_n(z)$ as defined previously, define
$$\phi_{n}(t,z,x) = \left( 1-\gamma_n(z) \right) \phi(t,z,x). $$
Then $\phi_{n}$ is compactly supported in $\Rthreeplus$ and $\grad\phi_{n}$, $\partial_t\phi_{n}$, and $\phi_n$ converge to $\grad\phi$, $\partial_t \phi$, and $\phi$ respectively, both pointwise in $\Rthreeplus$ and in any Lebesgue space. Therefore
\begin{align*}
-\int_0^T \int_0^\infty &\int_{\mathbb{R}^2} \left(\left( \partial_t \phi + \grad^\perp \Psi \cdot \grad \phi  \right) \Delta \Psi + \phi f_L  \right) \,dx\,dz\,dt \\
&= \lim_{n\rightarrow \infty} -\int_0^T \int_0^\infty \int_{\mathbb{R}^2} \left(\left( \partial_t \phi_{n} + \grad^\perp \Psi \cdot \grad \phi_{n}  \right) \Delta \Psi + \phi_{n} f_L  \right) \,dx\,dz\,dt \\
&= \lim_{n\rightarrow \infty} \int_0^\infty \int_{\mathbb{R}^2} \phi_{n}(0,z,x)\Delta\Psi(0,z,x) \,dx\,dz\\
&= \int_0^\infty \int_{\mathbb{R}^2} \phi(0,z,x) \Delta\Psi(0,z,x)\,dx\,dz,.
\end{align*}

We have then that the right hand side of \eqref{ibp1} is equal to the right hand side of \eqref{ibp2}, showing then that the left hand side of \eqref{ibp1} is equal to the left hand side of \eqref{ibp2}.  Therefore, $\nabla\Psi$ satisfies \cref{weakrqg} and is a weak solution to $(rQG)$.

\end{proof}

\begin{proof}[Proof of \cref{connectionstheorem}(2)]
As in part (1), the proof is split up into two steps.  

$Step$ $One:$ We assume that $p\in(\frac{4}{3},2]$, $q\in[\frac{3}{2},3]$, and 
$$ p\geq\frac{2q}{3(q-1)}. $$  Let us first point out the implications of the assumptions on $p$ and $q$. Throughout, we use the definitions of $\Psi_1$ and $\Psi_2$ described in \cref{weakqgc}. First, since $q\geq\frac{3}{2}$, \cref{laxmilgram1} ensures that for all time, $\nabla\Psi_2\in L^3_{loc}(\Rthreeplus)$, and therefore $\nabla\Psi_2\Delta\Psi \in L_{loc}^1(\Rthreeplus)$ is well-defined by H\"{o}lder's inequality.  Secondly, from \cref{laxmilgram2}, we have $\nabla\Psi_1 \in L^\frac{3p}{2}(\Rthreeplus)$.  Thus, the assumption that $p\geq\frac{2q}{3(q-1)}$ ensures that 
$$ \frac{2}{3p} \leq \frac{q-1}{q},$$ and therefore $\nabla\Psi_1 \Delta\Psi \in L_{loc}^1(\Rthreeplus)$ is also well-defined by H\"{o}lder's inequality.  Next, applying \cref{trace} to $\Psi_2$ gives that $\nabla\Psi_2|_{z=0}\in L^\frac{2q}{3-q}(\mathbb{R}^2)$.  Using that $p\geq\frac{2q}{3(q-1)}$, it follows that 
$$ \frac{3-q}{2q} \leq \frac{p-1}{p} .$$
Therefore, $\grad^\perp \Psi_2|_{z=0} \dnu \in L_{loc}^1(\mathbb{R}^2)$ is also well-defined from H\"{o}lder's inequality.  Combined with the fact that $p > \frac{4}{3}$, we can apply \cref{Marchand}, yielding that 
$$ \left( \grad^\perp \Psi \dnu \right)_C $$
is well-defined as a distribution.  

The proof now proceeds as before.  We regularize and extend $\nabla\Psi$ to $\nabla\Psi_{E,\epsilon}$.  Then 
\begin{align*}
-\int_0^T \int_0^\infty \int_{\mathbb{R}^2} &\left(\left( \partial_t \nabla \phi + \grad^\perp\Psi_{E,\epsilon} : \grad \nabla \phi \right)\cdot\nabla\Psi_{E,\epsilon} + \nabla\phi\cdot\nabla F \right) \,dx\,dz\,dt  \\
& \qquad =\int_0^T \int_0^\infty \int_{\mathbb{R}^2} \left(\left( \partial_t \phi + \grad^\perp \Psi_{E,\epsilon} \cdot \grad \phi \right) \Delta\Psi_{E,\epsilon}  + \phi \Delta F \right)  \,dx\,dz\,dt \nonumber\\
&\qquad\qquad - \int_0^T \int_{\mathbb{R}^2} \left(\left( \partial_t \phi + \grad^\perp \Psi_{E,\epsilon} \cdot \grad \phi \right) \dnu_{E,\epsilon} + \phi \partial_\nu F \right) \,dx\,dt\\
& \qquad =\int_0^T \int_0^\infty \int_{\mathbb{R}^2} \left(\left( \partial_t \phi + \grad^\perp \Psi_{E,\epsilon} \cdot \grad \phi \right) \Delta\Psi_{E,\epsilon}  + \phi \Delta F \right)  \,dx\,dz\,dt \nonumber\\
&\qquad\qquad - \int_0^T \int_{\mathbb{R}^2} \left(\partial_t \phi \dnu_{E,\epsilon} + \left(\grad^\perp \Psi_{E,\epsilon} \dnu_{E,\epsilon}\right)_C \cdot \grad \phi + \phi \partial_\nu F \right) \,dx\,dt.
\end{align*}
The second equality holds since the smoothness of $\nabla\Psi_{E,\epsilon}$ ensures that $\left( \grad^\perp \Psi_{E,\epsilon} \dnu_{E,\epsilon} \right)_C $ as a distribution is equal to the regular distribution $\left( \grad^\perp \Psi_{E,\epsilon} \dnu_{E,\epsilon} \right)$ (see \cref{Marchand}).
We have
$$ \Delta\Psi_{E,\epsilon} \rightarrow \Delta\Psi \qquad \textnormal{in} \qquad L^2\left([0,T);L^q(\Rthreeplus)\right) $$
$$ \nabla\Psi_{E,\epsilon} \rightarrow \nabla\Psi \qquad \textnormal{in} \qquad L^2\left([0,T);L_{loc}^\frac{q}{q-1}(\Rthreeplus)\right) ,$$
and therefore
$$\nabla\Psi_{E,\epsilon} \Delta\Psi_{E,\epsilon} \rightarrow \nabla\Psi\Delta\Psi \qquad \textnormal{in} \qquad L^1\left([0,T);L_{loc}^1(\Rthreeplus)\right) $$
and
$$ \grad^\perp\Psi_{E,\epsilon} \otimes \nabla\Psi_{E,\epsilon} \rightarrow \grad^\perp\Psi\nabla\Psi \qquad \textnormal{in} \qquad L^1\left([0,T);L_{loc}^1(\Rthreeplus)\right) $$
We have 
$$ \dnu_{E,\epsilon} \rightarrow \dnu \qquad \textnormal{in} \qquad L^2\left([0,T);L^p(\mathbb{R}^2)\right) $$
and the weak-* convergence
$$ \dnu_{E,\epsilon} \rightarrow \dnu \qquad \textnormal{in} \qquad L^\infty\left([0,T);L^{p}(\mathbb{R}^2)\right). $$
In addition,
$$ \grad^\perp \Psi_{E,\epsilon, 2}|_{z=0} \rightarrow \grad^\perp \Psi_2|_{z=0} \qquad \textnormal{in} \qquad L^2\left([0,T);L_{loc}^\frac{p}{p-1}(\mathbb{R}^2)\right), $$
and applying \cref{Marchand} since $p>\frac{4}{3}$ yields that
\begin{align*}
\grad \cdot \left( \grad^\perp \Psi_{E,\epsilon} \dnu_{E,\epsilon} \right)_C &= \grad \cdot \left( \grad^\perp \Psi_{E,\epsilon, 2} \dnu_{E,\epsilon} - \dnu_{E,\epsilon} \riesz^\perp \dnu_{E,\epsilon} \right) \\
&\rightarrow \grad\cdot \left(  \grad^\perp\Psi \dnu \right)_C \qquad \textnormal{in} \qquad \mathcal{D}'\left((0,T)\times\mathbb{R}^2\right). 
\end{align*}
Passing to the limit shows that 
\begin{align}\label{ibp3}
-\int_0^T \int_0^\infty \int_{\mathbb{R}^2} &\left(\left( \partial_t \nabla \phi + \grad^\perp\Psi : \grad \nabla \phi \right)\cdot\nabla\Psi + \nabla\phi\cdot\nabla F \right) \,dx\,dz\,dt  \nonumber\\
& \qquad =\int_0^T \int_0^\infty \int_{\mathbb{R}^2} \left(\left( \partial_t \phi + \grad^\perp \Psi \cdot \grad \phi \right)\cdot \Delta\Psi  + \phi \Delta F \right)  \,dx\,dz\,dt \nonumber\\
&\qquad\qquad - \int_0^T \int_{\mathbb{R}^2} \left(\partial_t \phi \dnu + \left(\grad^\perp \Psi \dnu\right)_C \cdot \grad \phi + \phi \partial_\nu F \right) \,dx\,dt
\end{align}
and
\begin{align}\label{ibp4}
\int_0^\infty \int_{\mathbb{R}^2} \nabla \phi(0,z,x) \cdot \nabla\Psi(0,z,x) \,dx\,dz &= -\int_0^\infty \int_{\mathbb{R}^2} \phi(0,z,x) \Delta \Psi(0,z,x) \,dx\,dz   \nonumber\\
& \qquad+  \int_{\mathbb{R}^2} \phi(0,0,x)\dnu(0,0,x) \,dx,.
\end{align}

$Step$ $Two:$  Assuming \eqref{ibp3} and \eqref{ibp4} hold, we can argue precisely as in the proof of \cref{connectionstheorem}(1) to prove the theorem.  We refer the reader to the proof of \cref{connectionstheorem}(1) for further details. 
\end{proof}

\begin{proof}[Proof of \cref{connectionstheorem}(3)]
The claim follows immediately from the observation that since $\Psi_1$ is harmonic,
$$\grad^\perp \Psi_1 = (0, -\partial_{x_2}\Psi_1, \partial_{x_1}\Psi_1) = (0, \mathcal{R}_2(\partial_\nu\Psi_1), -\mathcal{R}_1(\partial_\nu\Psi_1)) = -\riesz^\perp (\dnu_1)$$
and the claim in \cref{Marchand} that
$$ \int_{\mathbb{R}^2} f \riesz^\perp f \cdot \grad \phi = -\frac{1}{2} \int_{\mathbb{R}^2} \left(\riesz^\perp f \right)\cdot \left( [\bar{\Lambda}, \grad\phi] \left(\bar{\Lambda}^{-1}f\right)\right)$$
for $f\in L^2(\mathbb{R}^2)$.
\end{proof}

\section{Appendix}
\begin{proof}[Proof of \cref{regularizedsolutions}]
The differences between the setting of \cite{novackvasseur} and $(QG)_\epsilon$ are the presence of smooth forcing terms and the replacement of the term $-\lap \Psi_\epsilon$ (which comes from the physical consideration of Ekman layers) with the simplified diffusive term $\half \dnu_\epsilon$.  When $\Delta\Psi\equiv 0$, the two diffusive terms are equal.  When $\Psi$ is not harmonic, $\half \dnu_\epsilon$ is easier to analyze, as it ignores the effect of interior vorticity which appears in the term $\lap \Psi_\epsilon$. Local in time existence of smooth solutions in both \cite{novackvasseur} and $(QG)_\epsilon$ follows from classical semigroup techniques, such as those formulated by Kato \cite{kato}. Then, the proof of global existence in \cite{novackvasseur} is predicated on estimates which show that a sufficient level of regularity of the trajectories of the velocity field $\grad^\perp \Psi_\epsilon$ depends \textit{only} on quantities which are preserved by the evolution of the system. Applying a continuation criterion finishes the proof. Our goal is to provide an outline of the simple changes needed to apply those arguments to $(QG)_\epsilon$.

In the context of the Euler equations, a threshold for propagation of regularity is given in the following slight generalization of the Beale-Kato-Majda criterion (see \cite{bcd} for example).  If the solution $v$ is log-Lipschitz ($LL$) in space for each time and satisfies
$$ \int_0^T \| v(t) \|_{LL} < \infty, $$
then any sufficiently smooth solution can be continued beyond time $T$. In the context of $(QG)$ and the regularized system $(QG)_\epsilon$, the velocity field is stratified, indicating that regularity in the flat variables only should be enough to propagate Sobolev regularity.  It is well known that the endpoint Besov space $\Bdot_{\infty,\infty}^1(\mathbb{R}^2)$ embeds in the space of log-Lipschitz functions $LL$ (we refer again to \cite{bcd} for a discussion of these endpoint spaces).  The content of section 5 in \cite{novackvasseur} is thus an adaptation of the Beale-Kato-Majda argument for (QG) which shows that if
\begin{align}\label{besovestimate}
\grad^\perp \Psi \in L^\infty([0,T]\times[0,\infty);\Bdot_{\infty,\infty}^1(\mathbb{R}^2))
\end{align}
then the higher Sobolev norms of $\nabla\Psi$ satisfy a differential inequality on $[0,T]$, showing that the solution can be continued beyond time $T$. Adding smooth forcing terms $f_{L,\epsilon},f_{\nu,\epsilon}$ to the right hand side will introduce terms depending on Sobolev norms of $f_{L,\epsilon}, f_{\nu,\epsilon}$ into the differential inequality for $\|\nabla\Psi\|_{H^s}$; as long as the forcing terms are smooth, the argument functions in the same manner as the non-forced case.

The bulk of the argument of \cite{novackvasseur} then consists of showing that the estimate \eqref{besovestimate} is preserved by the evolution of the system and does not blow up in finite time. This is achieved in three main steps.  First, the de Giorgi technique is applied to obtain a $C^\alpha$ estimate on $\dnu$.  Second, a bootstrapping argument combining potential theory and Littlewood-Paley techniques shows that $\dnu \in L^\infty([0,T];\Bdot_{\infty,\infty}^1(\mathbb{R}^2))$.  Third, it is shown that once  $\dnu \in L^\infty([0,T];\Bdot_{\infty,\infty}^1(\mathbb{R}^2))$, \eqref{besovestimate} must hold. The third step requires no adaptations and we briefly describe it now before moving to the first two.  Using the notation for $\Psi_{1,\epsilon}$ and $\Psi_{2,\epsilon}$ as throughout the paper, simple properties of the Riesz transform and the Poisson kernel (details are contained in the short discussion immediately preceeding Theorem 4.3 in \cite{novackvasseur}) show that 
$$ \|\grad^\perp \Psi_{1,\epsilon}\|_{L^\infty([0,T]\times[0,\infty);\Bdot_{\infty,\infty}^1)} \lesssim \|\dnu_{1,\epsilon}\|_{L^\infty([0,T];\Bdot_{\infty,\infty}^1)}=\|\dnu_\epsilon\|_{L^\infty([0,T];\Bdot_{\infty,\infty}^1)}. $$
In addition, since $\Delta \Psi_\epsilon = \Delta \Psi_{2,\epsilon}$ solves a transport equation with divergence free drift and smooth forcing, the method of characteristics shows that the $L^\infty(\Rthreeplus)$ norm of $\Delta \Psi_{2,\epsilon}$ depends only on the initial data and the forcing term $f_{L,\epsilon}$. Then, properties of the Riesz transforms and classical trace estimates for Besov spaces show that  $\grad^\perp\Psi_{2,\epsilon} \in L^\infty([0,T]\times[0,\infty);\Bdot_{\infty,\infty}^1(\mathbb{R}^2))$, and thus \eqref{besovestimate} holds.  For further details of this step we refer to Theorem 4.3 and the preceding discussion in \cite{novackvasseur}. We move then to the first two steps.

The de Giorgi argument is contained in section 3 of \cite{novackvasseur} and is written for equations with divergence free drift $u$ and forcing $f$
$$ \partial_t \theta + u \cdot \grad \theta + \half \theta = f. $$
Setting $\theta=\dnu_\epsilon$, $u=\grad^\perp \Psi_\epsilon$, and $f=f_{\nu,\epsilon}$ shows that $(QG)_\epsilon$ falls into this regime.  The steps of the De Giorgi argument include a global $L^\infty$ bound (Lemma 3.3), a local $L^\infty$ bound (Lemma 3.4), an isoperimetric lemma (Lemma 3.5), and a decrease in oscillation (Lemmas 3.6 and 3.7).  Combining each step yields a H\"{o}lder modulus of continuity which depends only on $\|\Psi_{0,\epsilon}\|_{H^3(\Rthreeplus)}$ and certain norms of the forcing $f_{\nu,\epsilon}$. As the initial data and forcing have been regularized, these bounds are satisfied for $(QG)_\epsilon$. To give a flavor of the de Giorgi arguments, we state the global $L^\infty$ bound; the following steps can be stated entirely analogously to the lemmas from \cite{novackvasseur} referenced above.  
\begin{lemma}[Global $L^\infty$ bound]
For any $M>0$, there exists $L>0$ such that the following holds.  Let $\theta \in L^\infty([-2,0];H^\frac{5}{2}(\mathbb{R}^2))$ be a solution to 
$$\partial_t \theta + u \cdot \grad \theta + \half \theta =  f$$
with 
$$||\theta||_{L^\infty([-2,0];L^2(\mathbb{R}^2))} + ||(-\lap)^{-\frac{1}{4}} f||_{L^\infty([-2,0]; C^{\frac{1}{2}}(\mathbb{R}^2))}<M$$
and $\dive u=0$.  Then $\theta(t,x)\leq L$ for $(t,x)\in[-1,0]\times\mathbb{R}^2$.
\end{lemma}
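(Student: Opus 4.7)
The plan is to implement a De Giorgi level-set iteration in the spirit of Caffarelli--Vasseur, adapted to accommodate the smooth forcing term $f$. The key quantitative output is a nonlinear recursion on the truncated energies that forces them to zero once the cutoff level $L$ is chosen large enough in terms of $M$.

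Fix a parameter $L>0$ (to be determined) and define the increasing sequence of cutoffs $C_k := L\left(1 - 2^{-k}\right)$ and decreasing sequence of times $T_k := -1 - 2^{-k}$, so that $C_0 = 0$, $T_0 = -2$, while $C_k \nearrow L$ and $T_k \searrow -1$. The truncations are $\theta_k := (\theta - C_k)_+$, and the natural energy functional adapted to the critical dissipation is
\begin{align*}
U_k := \sup_{t\in[T_k,0]} \|\theta_k(t)\|_{L^2(\mathbb{R}^2)}^2 + 2\int_{T_k}^0 \|(-\lap)^{\frac14}\theta_k(s)\|_{L^2(\mathbb{R}^2)}^2\,ds.
\end{align*}
First I would derive the level-set energy inequality by multiplying the equation by $\theta_k$. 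The transport term vanishes after integration in space since $u\cdot\grad\theta\cdot\theta_k = u\cdot\grad(\theta_k^2/2)$ and $\dive u=0$. For the dissipation, a symmetrization of the singular integral representation for $\half$ (equivalent to the Córdoba--Córdoba pointwise inequality applied to the convex function $s\mapsto (s-C_k)_+^2$) yields
\begin{align*}
\int_{\mathbb{R}^2}\theta_k \half\theta\,dx \geq \|(-\lap)^{\frac14}\theta_k\|_{L^2}^2.
\end{align*}
For the forcing, write $g := (-\lap)^{-\frac14}f$, so that $\|g\|_{L^\infty([-2,0];C^{\frac12})} \leq M$, and rewrite $\int \theta_k f = \int g\,(-\lap)^{\frac14}\theta_k$. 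Combining Chebyshev's inequality $|\{\theta_k>0\}| \leq C M^2/C_k^2$ with Hölder and Sobolev embedding $\dot H^{1/2}(\mathbb{R}^2) \hookrightarrow L^4(\mathbb{R}^2)$, the forcing contribution can be absorbed into a small multiple of the dissipation plus an explicit remainder that decays in $L$ (this is the step where the smallness of $1/L$ enters).

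With the resulting inequality $\partial_t \|\theta_k\|_{L^2}^2 + \|(-\lap)^{\frac14}\theta_k\|_{L^2}^2 \leq R_k$ in hand, I would then average in an initial time $s\in [T_{k+1},T_k]$ and take a supremum in $t\in[T_{k+1},0]$, replacing $\|\theta_k(s)\|_{L^2}^2$ by $2^{k+1}\int_{T_k}^0\|\theta_k\|_{L^2}^2\,d\tau$. To close the iteration, the right-hand side needs to be bounded by a power of $U_k$ larger than $1$. This is achieved by using the inclusion $\{\theta_{k+1} > 0\} \subset \{\theta_k > C_{k+1}-C_k\} = \{\theta_k > L 2^{-(k+1)}\}$ together with interpolation between $L^2$ and $L^4$ (via the Sobolev embedding above) to produce an estimate of the form
\begin{align*}
U_{k+1} \leq C\, 2^{\beta k} L^{-\gamma} U_k^{1+\delta}
\end{align*}
for some constants $\beta,\gamma,\delta>0$ depending only on $M$ and absolute constants.

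The final step is the classical De Giorgi lemma: if $U_0$ is sufficiently small relative to the constants in the recursion, then $U_k \to 0$. The base case $U_0 \leq C M^2$ is bounded by hypothesis, so choosing $L$ sufficiently large (depending only on $M$) ensures that the prefactor $C L^{-\gamma}$ in the recursion is small enough to trigger geometric decay. Sending $k\to\infty$ gives $\theta \leq L$ a.e. on $[-1,0]\times\mathbb{R}^2$. The main technical obstacle will be the forcing estimate: unlike the homogeneous case, $f\notin L^2$ in general, so the forcing contribution must be carefully split into a piece supported on $\{\theta_k>0\}$ (controlled by $|S_k|$ and Chebyshev) and a piece absorbed by the fractional dissipation via duality against $(-\lap)^{\frac14}\theta_k$, all while maintaining the crucial $L^{-\gamma}$ factor that makes the iteration converge.
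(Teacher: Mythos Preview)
The paper does not actually prove this lemma; in the appendix it is merely stated (``to give a flavor of the de Giorgi arguments'') with the proof deferred to Lemma~3.3 of \cite{novackvasseur}. So there is no in-paper argument to compare against, but your outline is precisely the Caffarelli--Vasseur De~Giorgi iteration that the paper invokes: the truncations $\theta_k=(\theta-C_k)_+$ on shrinking time intervals, the energy functional $U_k$ combining $L^\infty_tL^2_x$ and $L^2_t\dot H^{1/2}_x$, the C\'ordoba--C\'ordoba inequality for the dissipation, the use of Chebyshev together with the embedding $\dot H^{1/2}(\mathbb{R}^2)\hookrightarrow L^4(\mathbb{R}^2)$ to extract a superlinear recursion $U_{k+1}\le C\,2^{\beta k}L^{-\gamma}U_k^{1+\delta}$, and finally choosing $L$ large in terms of $M$ to force $U_k\to 0$. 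This is the intended method.

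You have also correctly isolated the forcing term as the only place where the argument differs from the unforced case. One caution on your sketch there: after writing $\int\theta_k f=\int g\,(-\lap)^{1/4}\theta_k$ with $g\in C^{1/2}$, neither factor is supported on $\{\theta_k>0\}$, so the localization via Chebyshev does not apply directly to that pairing; moreover $g\in C^{1/2}$ is exactly the borderline for $f=(-\lap)^{1/4}g\in L^\infty$, so a naive H\"older bound is delicate. In the referenced argument this is handled by working with the harmonic-extension formulation of the dissipation (so that the energy inequality is local in $\mathbb{R}^3_+$) and pairing the forcing against the truncation itself, which \emph{is} supported on $\{\theta_k>0\}$. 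Your overall strategy is correct; only this step needs a slightly different organization than the duality you wrote.
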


Finally, let us describe the bootstrapping argument. The bootstrapping argument in \cite{novackvasseur} is built around the observation that the Poisson kernel is the fundamental solution to the equation
$$ \partial_t \theta + \half \theta = 0. $$
The choice of $\half \dnu_\epsilon$ as the diffusive term ensures that $(QG)_\epsilon$ again falls into this regime. In both \cite{novackvasseur} and $(QG)_\epsilon$, two forcing terms appear on the right hand side. The first term in both settings is of the form $u \cdot \grad \theta$ and comes from the nonlinearity.  The second term in \cite{novackvasseur} comes from the effect of $\Psi_2$ on the diffusive term $\lap \Psi$, whereas in $(QG)_\epsilon$ it comes from $f_{\nu,\epsilon}$. Lemma 4.1 in \cite{novackvasseur} asserts that the regularity of the nonlinear term is effectively \textit{additive}. That is, if $f$ and $g$ are $C^\alpha$ and $\theta_1$ solves
$$ \partial_t \theta_1 + \half \theta_1 = \grad\cdot(g_1 g_2) , $$
the representation formula for $\theta_1$ given by the Poisson kernel gives a $C^{2\alpha}$ estimate on $\theta_1$. Repeating this argument bootstraps the regularity of $\theta_1$ all the way to $C^{1,\alpha}$ for any $\alpha\in(0,1)$. Setting $g_1=\grad^\perp\Psi_\epsilon$ and $g_2=\dnu_\epsilon$ allows us to apply Lemma 4.1 to $(QG)_\epsilon$. Finally, as the forcing term $f_{\nu,\epsilon}$ is smooth in space, the solution to the fractional heat equation 
$$ \partial_t \theta_2 + \half \theta_2 = f_{\nu,\epsilon} $$ is smooth in space as well. Setting $\theta=\theta_1 + \theta_2$ and combining the two arguments shows that $\theta \in L^\infty([0,T];\Bdot_{\infty,\infty}^1(\mathbb{R}^2))$.  We refer again to the discussion in \cite{novackvasseur} which precedes Theorem 4.3 for details on the combination of these two steps.  The conclusion is that $  \dnu_\epsilon \in {L^\infty([0,T];\Bdot_{\infty,\infty}^1)} $ with norm depending only on the initial data $\Psi_{0,\epsilon}$ and the forcing terms, concluding the construction of a smooth solution.

The estimate in (1) follows from the method of characteristics.  For (2), recall that for $0<\alpha \leq 2$, $1\leq p < \infty$ the following inequality holds (see \cite{Marchand} for example):
$$0 \leq \int_{\mathbb{R}^2} \theta |\theta|^{p-2} \Lambda^{\alpha} \theta. $$
Multiplying by $\dnu_\epsilon |\dnu_\epsilon|^{p-2}$, integrating by parts, and applying the inequality with $\theta=\dnu_\epsilon$ and $\alpha=1$ then shows (2) for $p<\infty$. The estimate for $p=\infty$ follows after noticing that the initial data $\dnu_{0,\epsilon}$ and the forcing $f_{\nu,\epsilon}$ are smooth and compactly supported (in space), allowing us to take the limit of the estimate as $p\rightarrow \infty$.  Applying \cref{laxmilgram1} and \cref{laxmilgram2} gives (3).
\end{proof}

\section{acknowledgements}
The author would like to thank Vlad Vicol for suggesting the problem of global existence for data in Lebesgue spaces and directing the author to the result of Marchand.    

\bibliography{references}
\bibliographystyle{plain}
\nocite{nguyen}
\nocite{kato}
\nocite{boundeddomains}
\nocite{cn}
\nocite{cn2}
\nocite{ci}
\nocite{ci2}
\nocite{pv}
\nocite{cmt}
\nocite{MR1312238}
\nocite{Marchand}
\nocite{Resnick}
\nocite{dg}
\nocite{bb}
\nocite{novackvasseur}
\nocite{Esc}
\nocite{bcd}
\nocite{constantin1994}
\nocite{Isett2015}
\nocite{2016arXiv161000676B}
\nocite{Temam2001}
\nocite{a}
\nocite{arbogastandbona}
\nocite{2013arXiv1310.7947I}
\end{document}